\numberwithin{equation}{section}
\numberwithin{figure}{section}
\theoremstyle{plain}
\newtheorem{thm}{\protect\theoremname}
  \theoremstyle{remark}
  \newtheorem{rem}{\protect\remarkname}
  \theoremstyle{plain}
  \newtheorem{cor}{\protect\corollaryname}
 \theoremstyle{definition}
 \newtheorem*{defn*}{\protect\definitionname}
  \theoremstyle{plain}
  \newtheorem{lem}{\protect\lemmaname}
    \newtheorem{prop}{\protect\propositionname}
\newtheorem{example}[thm]{\protect\examplename}
\def\locphi{{\iota}}
\def\IR{\mathbb R}
\def\IQ{\mathbb Q}
\def\IZ{\mathbb Z}
\def\Sm{S}
\def\Rm{R}
\def\GL{\text{GL}}
\def\SL{\text{SL}}
\def\s{s}
  \providecommand{\definitionname}{Definition}
  \providecommand{\lemmaname}{Lemma}
  \providecommand{\remarkname}{Remark}
\providecommand{\corollaryname}{Corollary}
\providecommand{\theoremname}{Theorem}
  \providecommand{\definitionname}{Definition}
  \providecommand{\lemmaname}{Lemma}
  \providecommand{\remarkname}{Remark}
\providecommand{\corollaryname}{Corollary}
\providecommand{\theoremname}{Theorem}
\providecommand{\propositionname}{Proposition}
\providecommand{\examplename}{Example}
\begin{document}
\selectlanguage{british}%
\global\long\def\mod{\,\mathrm{mod\,\,}}
 \global\long\def\Nm{\mathrm{Nm\,}}

\title{On a Counting Theorem of Skriganov}
\begin{abstract}
We prove a counting theorem concerning the number of lattice points for the dual lattices of
weakly admissible lattices in an inhomogeneously expanding box, which generalises a counting theorem of Skriganov. 
The error term is expressed in terms of a certain function
$\nu(\Gamma^\perp,\cdot)$ of the dual lattice $\Gamma^\perp$, and we carefully
analyse the relation of this quantity with $\nu(\Gamma,\cdot)$.
In particular, we show that $\nu(\Gamma^\perp,\cdot)=\nu(\Gamma,\cdot)$
for any unimodular lattice of rank 2, but that for higher ranks it is in general
not possible to bound one function in terms of the other. 
Finally, we apply our counting theorem to establish asymptotics
for the number of Diophantine approximations with bounded denominator
as the denominator bound gets large. 
\end{abstract}

\author{Niclas Technau\and{}Martin Widmer}

\thanks{The first author was supported by the Austrian Science Fund (FWF):
W1230 Doctoral Program ``Discrete Mathematics''.}

\address{Institut für Analysis und Zahlentheorie\\
 Technische Universität Graz\\
 Steyrergasse 30\\
 A-8010\\
 technau@math.tugraz.at}

\address{Department of Mathematics\\
 Royal Holloway\\
 University of London\\
 TW20 0EX Egham\\
 UK\\ 
 Martin.Widmer@rhul.ac.uk}
 
\subjclass[2010]{Primary 11P21, 11H06; Secondary 11K60, 22E40, 22F30}
\keywords{Lattice points, counting, Diophantine approximation, inhomogeneously expanding boxes}

\maketitle

\section{Introduction}
In the present article, we are mainly concerned with four objectives. 
Firstly, we prove an explicit version of Skriganov's celebrated
counting result \cite[Thm. 6.1]{Skriganov} for lattice points 
of unimodular weakly admissible lattices in homogeneously expanding aligned boxes. 
Secondly, we use this version to generalise Skriganov's theorem to inhomogeneously expanding,
aligned boxes. Thirdly, we carefully investigate the relation between $\nu(\Gamma,\cdot)$
(see (\ref{mufunction}) for the definition) and $\nu(\Gamma^\perp,\cdot)$ 
of the dual lattice $\Gamma^\perp$ which captures the dependency 
on the lattice in these error terms. And fourthly, we apply our counting result
to count Diophantine approximations.

To state our first result, we need to introduce some notation.
By writing $f\ll g$ (or $f \gg g$) for functions $f,g$, we mean that there is a constant $c>0$ 
such that $f(x) \leq c g(x)$ (or $cf(x) \geq g(x)$) holds for all admissible values of $x$;
if the implied constant depends on certain parameters, then this dependency
will be indicated by an appropriate subscript. 
Let $\Gamma \subseteq\IR^{n}$ be a unimodular lattice, 
and let $\Gamma^{\perp}\coloneqq \left\{ w\in\IR^{n}:
\,\langle v,w\rangle\in\mathbb{Z} \quad\forall_{v\in\Gamma}\right\}$ 
be its dual lattice with respect to the standard inner product $\langle\cdot,\cdot\rangle$.
Let $\gamma_n$ denote the Hermite constant, and for $\rho > \gamma_{n}^{\nicefrac{1}{2}}$ set
\begin{alignat}1\label{mufunction}
\nu(\Gamma,\rho)\coloneqq\min\left\{ \vert x_{1}\cdots x_{n}\vert 
:\,\,x\coloneqq(x_{1}, \ldots, x_{n})^{T}\in\Gamma,\,
0<\left\Vert x\right\Vert_{2} < \rho\right\} 
\end{alignat}
where $\left\Vert \cdot\right\Vert _{2}$ denotes the Euclidean norm.
We say $\Gamma$ is weakly admissible if  $\nu(\Gamma,\rho)>0$ 
for all $\rho > \gamma_{n}^{\nicefrac{1}{2}}$.
Note that this happens if and only if $\Gamma$ has trivial intersection with every coordinate subspace.

Furthermore, let $\mathcal{T}\coloneqq \mathrm{diag}(t_{1},\ldots,t_{n})$ for $t_{i}>0$ 
be the diagonal matrix with diagonal entries $t_{1},\ldots,t_{n}$, and let $y\in \IR^n$.
We set 
$$B\coloneqq \mathcal{T} \left[0,1\right]^{n}+ y,$$ 
and we call such a set  an aligned box.
Moreover, we define 
$$T \coloneqq(\det \mathcal{T})^{\nicefrac{1}{n}}
\cdot \Vert \mathcal{T}^{-1} \Vert_{2}
=\frac{(t_1\cdots t_n)^{\nicefrac{1}{n}}}{\min\{t_1,\ldots,t_n\}}\geq 1$$ 
where $\Vert\cdot \Vert_{2}$ denotes the operator norm induced by the Euclidean norm.
Then, our generalisation of Skriganov's theorem reads as follows.
\begin{thm}\label{thm: inhomogenous Skriganov}
	Let  $n\geq 2$, let $\Gamma\subseteq \IR^{n}$ be a unimodular lattice, 
	and let $B\subseteq \IR^{n}$ be as above.
	Suppose $	\Gamma^{\perp}$ is weakly admissible, and 
	$\rho>\gamma_{n}^{\nicefrac{1}{2}}$. Then,
	\begin{equation}\label{eq: inhomogenous Skriganov bound}
		\left|\#(\Gamma\cap B)-\mathrm{vol}(B)\right|
		\underset{n}{\ll}\frac{1}{\nu\bigl(\Gamma^{\perp}, T^{\star} \bigr)}
		\biggl(
			\frac{(\mathrm{vol}(B))^{1-\nicefrac{1}{n}}}{\sqrt{\rho}}+
			\frac{R^{n-1}}{\nu(\Gamma^{\perp},2^{R}T)}
		\biggr)
	\end{equation}
	where $x^{\star}:=\max \left\{ \gamma_{n},x\right\}$, and
	$R\coloneqq n^{2}+\log\frac{\rho^{n}}{\nu(\Gamma^{\perp},\rho T)}$. 
\end{thm}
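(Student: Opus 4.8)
The plan is to reduce Theorem~\ref{thm: inhomogenous Skriganov} to the case of a cube by a volume-preserving diagonal change of variables, and to treat the cube case by Poisson summation against smooth majorants and minorants. Put $\mathcal T_{0}\coloneqq(\det\mathcal T)^{-\nicefrac{1}{n}}\mathcal T$, so that $\det\mathcal T_{0}=1$ and, by the definition of $T$, $\Vert\mathcal T_{0}^{-1}\Vert_{2}=(\det\mathcal T)^{\nicefrac{1}{n}}\Vert\mathcal T^{-1}\Vert_{2}=T$. Then $\mathcal T_{0}^{-1}B$ is a translated axis-parallel cube $\mathcal C$ of side length $L\coloneqq\mathrm{vol}(B)^{\nicefrac{1}{n}}$, the lattice $\Lambda\coloneqq\mathcal T_{0}^{-1}\Gamma$ is again unimodular, its dual is $\Lambda^{\perp}=\mathcal T_{0}\Gamma^{\perp}$ (since $\mathcal T_{0}$ is symmetric), and $\Lambda^{\perp}$ inherits weak admissibility from $\Gamma^{\perp}$ because $\mathcal T_{0}$ is diagonal. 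Since $\#(\Gamma\cap B)=\#(\Lambda\cap\mathcal C)$, it suffices to bound $\vert\#(\Lambda\cap\mathcal C)-L^{n}\vert$ for a unimodular, weakly admissible $\Lambda$ and a cube $\mathcal C$ of side $L$; the factors $T$ and $T^{\star}$ re-enter only at the end, when $\nu(\Lambda^{\perp},\cdot)=\nu(\mathcal T_{0}\Gamma^{\perp},\cdot)$ is re-expressed through $\nu(\Gamma^{\perp},\cdot)$. It is only weak admissibility of the \emph{dual} that is needed here---exactly the hypothesis---since the error term will live on $\Lambda^{\perp}$.

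For the cube case, fix $\delta\in(0,L/2)$ and choose $g_{\pm}\in C_{c}^{\infty}(\IR^{n})$ with $g_{-}\le\mathbf 1_{\mathcal C}\le g_{+}$, obtained by convolving the indicators of the $\delta$-shrunk and $\delta$-dilated cubes with a fixed smooth product bump of width $\delta$. Then $\widehat{g_{\pm}}(0)=\mathrm{vol}(\mathcal C)+O_{n}(\delta L^{n-1})$, and, for every $A>0$, $\vert\widehat{g_{\pm}}(w)\vert\ll_{n,A}\prod_{i=1}^{n}\min\{L,\vert w_{i}\vert^{-1}\}\,(1+\delta\vert w_{i}\vert)^{-A}$, the product structure coming from that of the cube and of the bump. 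Poisson summation over the unimodular lattice $\Lambda$ gives $\sum_{v\in\Lambda}g_{\pm}(v)=\sum_{w\in\Lambda^{\perp}}\widehat{g_{\pm}}(w)$, and hence
\[
\bigl\vert\#(\Lambda\cap\mathcal C)-L^{n}\bigr\vert\ll_{n}\delta L^{n-1}+\sum_{w\in\Lambda^{\perp}\setminus\{0\}}\ \prod_{i=1}^{n}\min\{L,\vert w_{i}\vert^{-1}\}\,(1+\delta\vert w_{i}\vert)^{-A}.
\]
Everything now hinges on estimating this sum.

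That estimate is the heart of the matter, and is where Skriganov's argument is used. Decompose $\Lambda^{\perp}\setminus\{0\}$ according to the dyadic orders of magnitude $2^{k_{i}}$ of $\vert w_{i}\vert$, $k\in\IZ^{n}$; on a shell of parameter $k$ one has $\prod_{i}\min\{L,\vert w_{i}\vert^{-1}\}\ll\prod_{i}\min\{L,2^{-k_{i}}\}$, and weak admissibility of $\Lambda^{\perp}$ supplies two decisive facts: if $\Vert w\Vert_{2}<\rho$ then $\prod_{i}2^{k_{i}}\gg\vert w_{1}\cdots w_{n}\vert\ge\nu(\Lambda^{\perp},\rho)$, so shells of too small total volume are empty; and this same product lower bound, via Minkowski's theorems, bounds the number of $\Lambda^{\perp}$-points on each shell. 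Feeding these in, the shells with $\Vert w\Vert_{2}\lesssim\rho$ yield a multidimensional geometric series in which the ``volume coordinate'' $\prod_{i}2^{k_{i}}$ runs between $\asymp\nu(\Lambda^{\perp},\rho)$ and $\asymp\rho^{n}$ while the remaining $n-1$ coordinates are free; taking $A=A(n)$ large and $\delta$ an appropriate negative power of $\rho$ (so that $(1+\delta\vert w_{i}\vert)^{-A}$ annihilates the shells with $\Vert w\Vert_{2}\gtrsim\rho$ and absorbs $\delta L^{n-1}$) collapses the series to a bound of the schematic shape
\[
\frac{1}{\nu(\Lambda^{\perp},\rho^{\star})}\Bigl(\frac{L^{n-1}}{\sqrt{\rho}}+\frac{\widetilde{R}^{n-1}}{\nu(\Lambda^{\perp},2^{\widetilde{R}})}\Bigr),\qquad \widetilde{R}\coloneqq n^{2}+\log\frac{\rho^{n}}{\nu(\Lambda^{\perp},\rho)}.
\]
Carrying out this dyadic bookkeeping cleanly---in particular extracting the exponent $n-1$, the exact power of $\rho$, and the tidy threshold $2^{\widetilde{R}}$ in place of an implicit cutoff---is the main obstacle; this is precisely the explicit homogeneous refinement of Skriganov's theorem, established separately.

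Finally one unwinds the substitution. If $u\in\mathcal T_{0}\Gamma^{\perp}$ satisfies $0<\Vert u\Vert_{2}<\rho$, then writing $u=\mathcal T_{0}w$ with $w\in\Gamma^{\perp}$ gives $0<\Vert w\Vert_{2}\le\Vert\mathcal T_{0}^{-1}\Vert_{2}\,\rho=T\rho$ while $\vert u_{1}\cdots u_{n}\vert=\vert w_{1}\cdots w_{n}\vert$ (as $\det\mathcal T_{0}=1$), so $\nu(\mathcal T_{0}\Gamma^{\perp},\rho)\ge\nu(\Gamma^{\perp},T\rho)$. Invoking this at the relevant arguments, and using that $\nu(\Gamma^{\perp},\cdot)\ll_{n}1$ (so the boundary term $\delta L^{n-1}$ may be displayed with a $\nu(\Gamma^{\perp},T^{\star})^{-1}$ factor), that both $\nu$-functions are non-increasing (so $2^{\widetilde{R}}$ may be replaced by $2^{R}$ once $\widetilde{R}\le R$ is checked), and folding the Hermite-constant threshold from Minkowski's theorem into the notation $x^{\star}=\max\{\gamma_{n},x\}$, transforms the homogeneous estimate into \eqref{eq: inhomogenous Skriganov bound}.
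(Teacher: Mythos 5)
Your overall architecture mirrors the paper's: write $U=(\det\mathcal T)^{\nicefrac1n}\mathcal T^{-1}$ (your $\mathcal T_{0}^{-1}$), so that $\#(\Gamma\cap B)=\#(U\Gamma\cap\overline t\mathcal C')$ for a translated unit cube $\mathcal C'$ and the rescaled unimodular lattice $\Lambda=U\Gamma$; invoke the explicit homogeneous (cube) version of Skriganov's theorem on $\Lambda$; and convert the resulting quantities living on $\Lambda^{\perp}=U^{-1}\Gamma^{\perp}$ back into $\nu(\Gamma^{\perp},\cdot)$. You are also transparent that the homogeneous input --- the paper's Theorem \ref{thm: Skriganovs Counting Theorem}, extracted from Skriganov --- is being taken as given; your Poisson-and-dyadic-shells description is a fair heuristic for it, and I won't press on that point.

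The genuine gap is in the conversion step, specifically in the prefactor. Your schematic homogeneous bound carries a prefactor $\nu(\Lambda^{\perp},\rho^{\star})^{-1}$, and your only conversion tool is the inequality $\nu(\Lambda^{\perp},\sigma)=\nu(U^{-1}\Gamma^{\perp},\sigma)\ge\nu(\Gamma^{\perp},T\sigma)$. Applied to the prefactor (at $\sigma=\rho^{\star}$, or even at the best possible choice $\sigma=\gamma_{n}$), this yields $\nu(\Gamma^{\perp},T\rho^{\star})^{-1}$ or $\nu(\Gamma^{\perp},T\gamma_{n})^{-1}$, both of which are genuinely \emph{worse} than the stated $\nu(\Gamma^{\perp},T^{\star})^{-1}$: for $T>1$ one has $T\gamma_{n}>T^{\star}$, hence $\nu(\Gamma^{\perp},T\gamma_{n})\le\nu(\Gamma^{\perp},T^{\star})$, and since $\nu$ is not Lipschitz the gap between these is not controllable by an $n$-dependent constant. ``Folding the Hermite constant into the $\star$-notation'' cannot rescue this; the factor you would be trying to swallow sits \emph{inside} the argument of a possibly very rapidly decaying function.

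The paper sidesteps this by never passing through $\nu(\Lambda^{\perp},\cdot)$ for the prefactor. The relevant term in the homogeneous estimate is really $\lambda_{n}^{n}(\Lambda)\ll_{n}\lambda_{1}^{-n}(U^{-1}\Gamma^{\perp})$ (Mahler), and Lemma \ref{lem: estimate nu function vs. n-th successive minimum}, inequality \eqref{eq: first minimum vs. nu of lattice}, bounds $\lambda_{1}^{n}(U^{-1}\Gamma^{\perp})\gg_{n}\nu(\Gamma^{\perp},\Vert U\Vert_{2}^{\star})=\nu(\Gamma^{\perp},T^{\star})$ \emph{directly}. The mechanism is a case split on the size of $v\in\Gamma^{\perp}$: if $\Vert v\Vert_{2}\le T$ then AM--GM and $\det U=1$ give $\Vert U^{-1}v\Vert_{2}^{n}\gg_{n}\mathrm{Nm}(v)\ge\nu(\Gamma^{\perp},T^{\star})$; if $\Vert v\Vert_{2}>T$ then $\Vert U^{-1}v\Vert_{2}>1\gg_{n}\nu(\Gamma^{\perp},T^{\star})^{\nicefrac1n}$. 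This one-shot bound is what produces the clean $\nu(\Gamma^{\perp},T^{\star})^{-1}$; your two-stage route (first $\nu(\Lambda^{\perp},\cdot)$, then convert) inevitably picks up an extra multiplicative $\gamma_{n}$ (or $\rho$) inside the argument of $\nu$ and therefore proves a weaker statement than \eqref{eq: inhomogenous Skriganov bound}. To close the gap, keep the prefactor as $\lambda_{1}^{-n}(U^{-1}\Gamma^{\perp})$ until the very end and apply the Lemma \ref{lem: estimate nu function vs. n-th successive minimum} optimisation with $D=U^{-1}$ rather than the naive dual-$\nu$ inequality. Your handling of the remaining terms --- $\widetilde R\le R$ and $\nu(\Lambda^{\perp},2^{\widetilde R})^{-1}\le\nu(\Gamma^{\perp},2^{R}T)^{-1}$ --- is fine, which is precisely what Lemma \ref{lem: ergodic sum vs. nu and r} encodes.
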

Note that $\rho^{n}/\nu(\Gamma^{\perp},\rho)\geq n^{n/2}$ 
by the inequality between arithmetic and geometric mean. Since $T\geq 1$ and 
\begin{alignat}1\label{Hermite}
\gamma_n\leq ({4}/{3})^{(n-1)/2},
\end{alignat}
we have $(2^{R}T)^{\star}=2^{R}T$, and hence, the far right hand-side
in (\ref{eq: inhomogenous Skriganov bound}) is well-defined.

The lattice $\Gamma$ is called admissible
if $\Nm(\Gamma):=\lim_{\rho\rightarrow \infty}\nu(\Gamma,\rho)>0$.
It is easy to show that if $\Gamma$ is admissible then also $\Gamma^\perp$
is admissible (see \cite[Lemma 3.1]{Skriganov1994}).
In this case we can choose $\rho=(\mathrm{vol}B)^{2-2/n}$,
provided the latter is greater than $\gamma_{n}^{\nicefrac{1}{2}}$, 
to recover the following impressive result of Skriganov (\cite[Theorem 1.1 (1.11)]{Skriganov1994})
	\begin{equation}\label{eq:admissible Skriganov bound}
		\left|\#(\Gamma\cap B)-\mathrm{vol}(B)\right|
		\underset{n, \Nm(\Gamma^\perp)}{\ll}(\log(\mathrm{vol}(B))^{n-1}.
	\end{equation}However, if $\Gamma$ is only weakly admissible,
then it can happen that $\Gamma^\perp$ is not weakly admissible;
see Example \ref{ex: Gamma weakly adm. doesn t imply Gamma_perp weakly adm}.
But this is a rather special situation and typically, e.g.,
if the entries of $A$ are algebraically independent, 
see Lemma \ref{cor: weak admissib. of dual of algb. independet lattice},
then $\Gamma=A\IZ^n$ and its dual are both weakly admissible. 
This raises the question whether, or under which conditions,
one can control $\nu(\Gamma^\perp,\cdot)$  by $\nu(\Gamma,\cdot)$.
We have the following result where we use the convention that for an integral domain $R$ 
the group of all matrices in $R^{n\times n}$ 
with inverse in $R^{n\times n}$ is denoted by $\mathrm{GL}_{n}(R)$. 
\begin{prop}\label{lem: lattices where nu fncs. equal}
	Let $\Gamma=A\IZ^n$, and suppose there exist $\Sm, \Rm$ both in $ \GL_n(\IZ)$ such that
	\[A^{T}\Sm A=\Rm,\]
	and suppose $\Sm$ has exactly one non-zero entry in each column and in each row. Then, we have 
	\begin{equation}\label{eq: nu of dual lattice vs. nu of lattice}
		\nu(\Gamma^{\perp},\cdot)=\nu(\Gamma,\cdot).		
	\end{equation}
\end{prop}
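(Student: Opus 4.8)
The plan is to show that the algebraic hypothesis forces $\Gamma^{\perp}$ to be the image of $\Gamma$ under a \emph{signed permutation matrix}, after which the identity (\ref{eq: nu of dual lattice vs. nu of lattice}) is immediate, because both quantities entering the definition (\ref{mufunction}) of $\nu$ — the Euclidean norm $\left\Vert x\right\Vert _{2}$ and the modulus $\vert x_{1}\cdots x_{n}\vert$ of the product of coordinates — are invariant under such a transformation.

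First I would rewrite the dual lattice in terms of $A$. Writing a generic element of $\Gamma=A\IZ^{n}$ as $Au$ with $u\in\IZ^{n}$, one has $\langle Au,w\rangle=u^{T}(A^{T}w)$, so this pairing is integral for every $u\in\IZ^{n}$ precisely when $A^{T}w\in\IZ^{n}$; hence $\Gamma^{\perp}=(A^{T})^{-1}\IZ^{n}$. Next I would use the relation $A^{T}\Sm A=\Rm$ to solve for $(A^{T})^{-1}=\Sm A\Rm^{-1}$, and then, since $\Rm\in\GL_{n}(\IZ)$ and therefore $\Rm^{-1}\IZ^{n}=\IZ^{n}$, conclude
\[
\Gamma^{\perp}=(A^{T})^{-1}\IZ^{n}=\Sm A\Rm^{-1}\IZ^{n}=\Sm A\IZ^{n}=\Sm\Gamma .
\]

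The second step identifies the matrix $\Sm$. As $\Sm$ has exactly one non-zero entry in each row and in each column, it factors as $\Sm=PD$ with $P$ a permutation matrix and $D=\mathrm{diag}(d_{1},\dots,d_{n})$, $d_{i}\in\IZ\setminus\{0\}$. Then $\Sm^{-1}=D^{-1}P^{-1}$ has all of its non-zero entries among $d_{1}^{-1},\dots,d_{n}^{-1}$, so integrality of $\Sm^{-1}$, which holds because $\Sm\in\GL_{n}(\IZ)$, forces $d_{i}=\pm1$ for every $i$. Thus $\Sm$ is a signed permutation matrix, hence orthogonal, and the linear map $x\mapsto\Sm x$ merely permutes the coordinates of $x$ up to signs. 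Consequently $\left\Vert \Sm x\right\Vert _{2}=\left\Vert x\right\Vert _{2}$ and $\vert(\Sm x)_{1}\cdots(\Sm x)_{n}\vert=\vert x_{1}\cdots x_{n}\vert$ for all $x\in\IR^{n}$.

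To finish, I would fix $\rho>\gamma_{n}^{\nicefrac{1}{2}}$ and observe that, by the preceding two steps, $x\mapsto\Sm x$ restricts to a bijection from $\{x\in\Gamma:0<\left\Vert x\right\Vert _{2}<\rho\}$ onto $\{x\in\Gamma^{\perp}:0<\left\Vert x\right\Vert _{2}<\rho\}$ which leaves $\vert x_{1}\cdots x_{n}\vert$ unchanged; since the two sets of attained values of $\vert x_{1}\cdots x_{n}\vert$ therefore coincide, taking minima yields $\nu(\Gamma^{\perp},\rho)=\nu(\Gamma,\rho)$, that is (\ref{eq: nu of dual lattice vs. nu of lattice}). (The same observation shows $\Gamma^{\perp}$ is weakly admissible if and only if $\Gamma$ is, so the statement is not vacuous.) I do not expect a genuine obstacle here: the argument is essentially mechanical, and the only point deserving a moment's attention is the deduction in the second step that a monomial matrix lying in $\GL_{n}(\IZ)$ must have all its non-zero entries equal to $\pm1$ — it is precisely this fact that makes the relevant change of variables an isometry, and hence that upgrades a comparison between $\nu(\Gamma^{\perp},\cdot)$ and $\nu(\Gamma,\cdot)$ to an exact equality.
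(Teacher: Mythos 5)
Your proof is correct and takes essentially the same route as the paper's: both rest on the observation that $\Sm$ is a signed permutation matrix (hence preserves $\Vert\cdot\Vert_2$ and $\vert x_1\cdots x_n\vert$), and both exploit $A^T\Sm A=\Rm$ with $\Rm\in\GL_n(\IZ)$ to pass between $\Gamma$ and $\Gamma^\perp$. The one cosmetic difference is that you package the correspondence as the single lattice identity $\Gamma^\perp=\Sm\Gamma$, whereas the paper proves the two inequalities $\nu(\Gamma^\perp,\cdot)\leq\nu(\Gamma,\cdot)$ and $\nu(\Gamma,\cdot)\leq\nu(\Gamma^\perp,\cdot)$ separately via the maps $Aw\mapsto(A^{-1})^T\Rm w$ and $(A^{-1})^Tw\mapsto A\Rm^{-1}w$; your phrasing is a touch cleaner but the underlying computation is identical.
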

A special case of Proposition \ref{lem: lattices where nu fncs. equal} 
shows that $\nu(\Gamma^\perp,\cdot)=\nu(\Gamma,\cdot)$
whenever $\Gamma=A\IZ^n$ with a symplectic matrix $A$, in particular, 
whenever\footnote{Let us write $Sp_{2m}(\IR)$ for the symplectic subgroup of
$GL_{2m}(\IR)$ and $SL_n(\IR)$ for the special linear subgroup
of $GL_n(\IR)$. The fact $Sp_2(\IR)=SL_2(\IR)$ can be checked directly.} 
$\Gamma$ is a unimodular lattice in $\IR^2$.
In these cases, one can directly compare Theorem \ref{thm: inhomogenous Skriganov}
with a recent result \cite[Theorem 1.1]{weaklyadmissible} of the second author,
and we refer to \cite{weaklyadmissible} for more on that. 
On the other hand, our next result shows that in general $\nu(\Gamma,\cdot)$ 
can decay arbitrarily quickly even if we control $\nu(\Gamma^\perp,\cdot)$.
\begin{thm} \label{prop: changing nu to nu dual }
	Let $n\geq3$, and let $\psi:(0,\infty)\rightarrow(0,1)$
	be non-increasing. Then, there exists a unimodular,
	weakly admissible lattice $\Gamma\subseteq\IR^{n}$, 
	and a sequence $\{\rho_{l}\}\subseteq (\gamma_{n}^{\nicefrac{1}{2}},\infty)$
	tending to $\infty$, as $l\rightarrow\infty$, such that 
	\[
	\nu(\Gamma^\perp,\rho)\gg\rho^{-n^{2}},
	\]
	and 
	\[
	\nu(\Gamma,\rho_{l})\leq\psi(\rho_{l})
	\]
	for all $l\in\mathbb{N}=\{1,2,3,\ldots\}$ and for all $\rho>\gamma_n^{\nicefrac{1}{2}}$.
\end{thm}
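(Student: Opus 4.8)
The plan is to build $\Gamma$ explicitly as $A\IZ^n$ with $A$ upper triangular and unipotent, so that $A\in\SL_n(\IR)$ and the dual lattice is governed by $A^{-T}$. Concretely, I would take
\[
A=\begin{pmatrix} 1 & a_1 & a_2 & \cdots & a_{n-1}\\ & 1 & & & \\ & & 1 & & \\ & & & \ddots & \\ & & & & 1\end{pmatrix},
\]
i.e.\ the identity with a prescribed first row $(1,a_1,\ldots,a_{n-1})$, where $a_1,\ldots,a_{n-1}$ are real parameters to be chosen. Then $\Gamma=A\IZ^n$ consists of vectors $(m_0+a_1 m_1+\cdots+a_{n-1}m_{n-1},\,m_1,\ldots,m_{n-1})^T$ with $m_i\in\IZ$, and the dual lattice is $A^{-T}\IZ^n$, whose vectors are $(k_0,\,k_1-a_1 k_0,\,\ldots,\,k_{n-1}-a_{n-1}k_0)^T$. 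The key structural point is the asymmetry: a short vector of $\Gamma^\perp$ of norm $<\rho$ has its \emph{first} coordinate an integer $k_0$ of size $O(\rho)$; if $k_0=0$ the vector lies in $\IZ^{n}$ and contributes a product of integers, hence $\geq 1$ in absolute value; if $k_0\neq 0$ the product $|k_0\prod_{j\ge1}(k_j-a_j k_0)|$ is controlled from below \emph{cheaply}, because we only need a lower bound like $\rho^{-n^2}$, and each factor $|k_j-a_jk_0|$ can be bounded below using an explicit Diophantine (badly-approximable–type) condition on the $a_j$. Thus the first task is: choose $a_1,\ldots,a_{n-1}$ so that $\|q a_j\|_{\IZ}\gg q^{-C}$ for all $q\ge1$ and all $j$ (such numbers exist in abundance — e.g.\ they can be taken to satisfy the far stronger property of being badly approximable), which by a short computation yields $\nu(\Gamma^\perp,\rho)\gg\rho^{-n^2}$ uniformly in $\rho>\gamma_n^{1/2}$. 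This handles the dual-lattice lower bound and only constrains the $a_j$ mildly.

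The second, and main, task is to force $\nu(\Gamma,\rho_l)\le\psi(\rho_l)$ along a sequence $\rho_l\to\infty$, and here I would exploit the remaining freedom: the condition on the $a_j$ above is a "large-set" condition (it holds for a set of full measure, indeed a dense $G_\delta$), so one can additionally arrange that \emph{some} $a_j$, say $a_1$, is extremely well approximable along a sparse sequence of denominators. Pick $q_1<q_2<\cdots$ growing fast and rationals $p_l/q_l$ with $|a_1-p_l/q_l|$ astronomically small relative to $\psi$; then the lattice vector $x^{(l)}:=A(p_l,-q_l,0,\ldots,0)^T=(p_l-a_1 q_l,\,-q_l,\,0,\ldots,0)^T$ is \emph{almost} supported on the second coordinate. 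Its first coordinate $|p_l-a_1 q_l|$ is tiny, its second coordinate is $-q_l$, and the rest vanish — but wait, this vector has two vanishing coordinates among $x_3,\ldots,x_n$, which would make the product zero and also violate weak admissibility. To repair this I would instead use $n=3$ first and then bootstrap: take $A$ so that $\Gamma$ is a product-like perturbation only in the first two coordinates while the remaining $n-2$ coordinates carry a fixed admissible (e.g.\ Littlewood-type or algebraic) lattice guaranteeing those coordinates of every nonzero vector are bounded away from $0$ in product. The cleanest implementation: let $\Gamma=\Gamma_0\oplus\Lambda$ where $\Gamma_0\subseteq\IR^2$ is unimodular and $\Lambda\subseteq\IR^{n-2}$ is a fixed admissible unimodular lattice (which exists for every rank, e.g.\ from a totally real number field); then $\nu(\Gamma,\rho)\asymp\nu(\Gamma_0,\rho)\cdot\Nm(\Lambda)$-type bounds hold, and likewise $\Gamma^\perp=\Gamma_0^\perp\oplus\Lambda^\perp$ with $\Lambda^\perp$ again admissible. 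Since in $\IR^2$ we have $\nu(\Gamma_0^\perp,\cdot)=\nu(\Gamma_0,\cdot)$ by the rank-$2$ case of Proposition \ref{lem: lattices where nu fncs. equal}, controlling one controls the other — which is exactly the wrong direction. So for the $\IR^2$ block one cannot decouple; instead the decoupling must be genuinely $3$-dimensional.

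Therefore the honest construction is: work in $\IR^3$ (then take direct sums with a fixed admissible lattice to reach arbitrary $n\ge3$), and use the unipotent $A$ above with two off-diagonal parameters $a_1,a_2$ in the first row. The asymmetry between $\Gamma$ and $\Gamma^\perp$ is now real: short vectors of $\Gamma^\perp$ always have an integer first coordinate and the two "perturbed" coordinates $k_j-a_jk_0$ in the \emph{other} two slots, so the product is bounded below using Diophantine properties of $a_1,a_2$; whereas short vectors of $\Gamma$ have the single perturbed coordinate $m_0+a_1m_1+a_2m_2$ in the \emph{first} slot and plain integers $m_1,m_2$ elsewhere. Choosing $(a_1,a_2)$ so that $a_1/a_2$ or a suitable $\IZ$-combination $a_1 m_1+a_2 m_2$ is super-well-approximable along a sparse sequence $(m_1^{(l)},m_2^{(l)})$, while $(a_1,a_2)$ still satisfies the mild lower-bound condition needed for the dual, produces vectors $x^{(l)}\in\Gamma$ with first coordinate $<\psi(\rho_l)/\rho_l$ and the other two coordinates of size $\le\rho_l$, hence $|x_1^{(l)}x_2^{(l)}x_3^{(l)}|<\psi(\rho_l)$ and $0<\|x^{(l)}\|_2<\rho_l$; this forces $\nu(\Gamma,\rho_l)\le\psi(\rho_l)$. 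I would organise the choice of the $a_j$ by a nested-intervals / Baire-category argument: the "good dual" condition defines a dense $G_\delta$, each "good at stage $l$" condition defines a dense open set, and their intersection is nonempty; one then checks that any parameter in this intersection yields a weakly admissible $\Gamma$ (no coordinate subspace is hit, because the relevant coordinates are genuinely irrational) with both required estimates. \textbf{The main obstacle} is making the two conditions on $(a_1,a_2)$ genuinely compatible in dimension $3$ — i.e.\ verifying that imposing super-fast rational approximation to the linear form $a_1 m_1+a_2 m_2$ along a sparse sequence does \emph{not} destroy the polynomial lower bound $\|k_0 a_j\|_{\IZ}\gg|k_0|^{-C}$ needed on each $a_j$ individually for the dual estimate; this is where the sparsity of the denominators $q_l$ and the freedom in choosing \emph{which} integer combination to approximate must be balanced, and it is the only place the hypothesis $n\ge3$ (two free parameters, so the linear form has a kernel) is essential.
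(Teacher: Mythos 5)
There is a fatal flaw in the construction, and it comes earlier than the "main obstacle" you identify. With the unipotent shear
\[
A=\begin{pmatrix} 1 & a_1 & \cdots & a_{n-1}\\ & 1 & & \\ & & \ddots & \\ & & & 1\end{pmatrix},
\]
the lattice $\Gamma=A\IZ^n$ is \emph{never} weakly admissible: taking $m_1=\cdots=m_{n-1}=0$ gives the nonzero lattice vector $(m_0,0,\ldots,0)^T\in\Gamma$, whose coordinate product is $0$; more generally, any choice with some $m_j=0$ ($j\geq1$) produces a nonzero vector with a vanishing coordinate. The same defect occurs on the dual side, and here your reasoning contains an explicit error: you write that if $k_0=0$ then ``the vector lies in $\IZ^n$ and contributes a product of integers, hence $\geq 1$'' --- but that product is $0\cdot k_1\cdots k_{n-1}=0$, not $\geq 1$. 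So $\Gamma^\perp$ contains $(0,k_1,\ldots,k_{n-1})^T$ with product $0$, and $\nu(\Gamma^\perp,\rho)=0$ once $\rho$ exceeds $\lambda_1$ of the sublattice $\{0\}\times\IZ^{n-1}$. No choice of $a_1,\ldots,a_{n-1}$, however generic or carefully tuned by a Baire-category argument, can repair this: the problem is structural, caused by the identity block and the zero subdiagonal entries, not by any arithmetic relation among the $a_j$. Your later attempt to ``bootstrap'' from $n=3$ and to decouple via $\Gamma=\Gamma_0\oplus\Lambda$ runs into the same issue you yourself flagged, and switching to a linear form $a_1m_1+a_2m_2$ does not fix weak admissibility either, since the coordinate-subspace intersections above persist regardless.

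The fix requires a matrix $A$ with \emph{no} zero entries and, in fact, with each row $\IQ$-linearly independent (so every nonzero $\Gamma$-vector has all coordinates nonzero); the same must hold for the rows of $(A^{-1})^T$. This is where the paper's approach diverges fundamentally from yours. The paper parametrises a family of full matrices $\tilde F(\lambda)$ by the diagonal $\lambda=(\lambda_1,\ldots,\lambda_n)$, with the off-diagonal entries $c_{i,j}$ fixed algebraically independent, and invokes Beresnevich's theorem on badly approximable vectors on nondegenerate analytic manifolds to find $\lambda$ for which every row of $(A^{-1})^T$ is badly approximable; by the transference principle this gives $\nu(\Gamma^\perp,\rho)\gg\rho^{-n^2}$. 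Crucially, the mechanism for killing $\nu(\Gamma,\rho_l)$ is not a shear but a \emph{ratio}: the construction sets $c_{1,2}=\beta_1$ and $c_{1,3}=\alpha\beta_1$ for a Liouville-type $\alpha$, and then for the integer vectors $v_l=(0,-p_l,q_l,0,\ldots,0)^T$ with $p_l/q_l\to\alpha$ very fast, the first coordinate of $Av_l$ equals $\beta_1(q_l\alpha-p_l)$, which is made $<\psi(\rho_l)/q_l^{n-1}$, while every other coordinate of $Av_l$ is a nonzero $O(q_l)$ combination of the remaining (algebraically independent) entries. So every factor in the coordinate product is nonzero, and the product is $\leq\psi(\rho_l)$. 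A direct Baire-category argument as you propose would have to produce a \emph{full} matrix (not a shear) realising both conditions simultaneously, and would then need a separate argument for weak admissibility of both $\Gamma$ and $\Gamma^\perp$; that is essentially the content of Lemma \ref{lem: ex. of matrix with some prescribed alg. indept. entries} and Lemma \ref{lem: quotient lemma} in the paper, which your sketch does not supply.
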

In the case where exactly one of the functions $\nu(\Gamma,\cdot)$, and $\nu(\Gamma^\perp,\cdot)$ 
is controllable while the other one decays very quickly
either Theorem \ref{thm: inhomogenous Skriganov}
or \cite[Theorem 1.1]{weaklyadmissible}  provides a reasonable error term,
but certainly not both. This highlights the complementary aspects 
of Theorem \ref{thm: inhomogenous Skriganov}, and  \cite[Theorem 1.1]{weaklyadmissible}. 
Theorem \ref{prop: changing nu to nu dual } is deeper 
than Proposition \ref{lem: lattices where nu fncs. equal}, and relies on a recent result
of Beresnevich about the distribution of badly approximable vectors on manifolds.

Next, we apply Theorem \ref{thm: inhomogenous Skriganov}
to deduce counting results for Diophantine approximations.
We start with a bit of historical background on this, and related problems.
Let $\alpha\in\IR$, let $\locphi:[1,\infty)\rightarrow(0,1]$ be
a positive decreasing function, and let $N_{\alpha}^{loc}(\locphi,t)$
be the number of integer pairs $(p,q)$ satisfying $|p+q\alpha|<\locphi(q),\;1\leq q\leq t$.
In a series of papers, starting in 1959, Erd\H{o}s \cite{Erdos59},
Schmidt \cite{SchmidtMTDA,SchmidtRB}, Lang \cite{AdamsLang65,LangQA,Lang66},
Adams \cite{Adams66,Adams67,Adams67II,Adams68,Adams68II,Adams69II,Adams69,Adams71},
Sweet \cite{Sweet73}, and others, considered the problem of finding
the asymptotics for $N_{\alpha}^{loc}(\locphi,t)$ as $t$ gets large.

Schmidt \cite{SchmidtMTDA} has shown that for almost every\footnote{Here ``almost every'' refers 
always to the Lebesgue measure.} $\alpha\in\IR$ the asymptotics 
are given by the volume of the corresponding
subset of $\IR^{2}$, provided the latter tends to infinity. 
This is false for quadratic $\alpha$;
there with $\locphi(q)=1/q$ the volume is $2\log(t)+O(1)$, and by
Lang's result $N_{\alpha}^{loc}(1/q,t)\sim c_{\alpha}\log(t)$
but Adams \cite{Adams68II} has shown that $c_{\alpha}\neq2$.

Opposed to the above ``localised'' setting, where the bound on $|p+q\alpha|$
is expressed as a function of $q$, we consider the ``non-localised''
(sometimes called ``uniform'') situation, where the bound is expressed
as a function of $t$. Furthermore, we shall consider the more general
asymmetric inhomogeneous setting. Let $\alpha\in(0,1)$ be irrational,
$\varepsilon,t\in(0,\infty)$, and let $y\in\IR$. We define
the counting function 
\begin{equation}
N_{\alpha,y}(\varepsilon,t)=\#\biggl\{(p,q)\in \mathbb{Z}\times\mathbb{N}:\begin{array}{c}
0\leq p+q\alpha -y\leq\varepsilon,\\
0\leq q\leq t
\end{array}\biggr\}.
\label{eq: definition of the counting function}
\end{equation}

If the underlying set is not too stretched, then $N_{\alpha,y}(\varepsilon,t)$
is roughly the volume $\varepsilon t$ of the set in which we are
counting lattice points. If we let $\varepsilon=\varepsilon(t)$ be a function
of $t$ with $t=o(t\varepsilon)$ we have, by simple standard estimates,
\begin{alignat}{1}
N_{\alpha,y}(\varepsilon,t)\sim\varepsilon t\label{Asy}
\end{alignat}
for any pair $(\alpha,y)\in((0,1)\setminus\mathbb{Q})\times\IR$ whatsoever.
To get non-trivial estimates for our counting function, we need information on the Diophantine
properties of $\alpha$. Let $\phi:\left(0,\infty\right)\rightarrow(0,1)$ be 
a non-increasing function such that
\begin{equation}
q\bigl|p+q\alpha\bigr|\geq\phi\left(q\right)\label{eq: definition of phi as lower bound}
\end{equation}
holds for all $(p,q)\in\mathbb{Z}\times\mathbb{N}$.
Then \cite[Theorem 1.1]{weaklyadmissible} implies that 
\begin{alignat}{1}
|N_{\alpha,y}(\varepsilon,t)-\varepsilon t|
\ll_{\alpha}\sqrt{\frac{\varepsilon t}{\phi(t)}}.\label{prevest}
\end{alignat}
Hence, unlike in the localised setting, for badly approximable $\alpha$ the asymptotics are given
by the volume as long as the volume tends to infinity. 

Our next result significantly improves the error term in (\ref{prevest}), provided $\alpha$
is ``sufficiently'' badly approximable, i.e., provided $\phi(t)$
decays slowly enough. 
We assume that
\begin{equation}
\varepsilon t>4\qquad\mathrm{and}\qquad0<\varepsilon<\sqrt{\alpha}.
\label{eq: assumptions on varepsilon t and alpha} 
\end{equation}
\begin{cor}\label{thm: main counting bound}
Put $E:=\frac{\varepsilon t}{\phi(4 t \sqrt{\varepsilon t})}$,	
and $E':=168\sqrt{\varepsilon t^{3}}E$.
Then, we have 
	\begin{equation}
		\left|
			N_{\alpha,y}(\varepsilon,t)-\varepsilon t
		\right| 
		\underset{\alpha}{\ll}
		\frac{\log E}{\phi^{2}(E')}.\label{eq: main estimate}
	\end{equation}
\end{cor}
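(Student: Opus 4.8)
The plan is to read off $N_{\alpha,y}(\varepsilon,t)$ as, up to an additive constant, the number of lattice points of a suitable unimodular \emph{weakly admissible} lattice inside the aligned box $B:=[y,y+\varepsilon]\times[0,t]$, and then to invoke Theorem~\ref{thm: inhomogenous Skriganov} with $n=2$. Since $N_{\alpha,y}(\varepsilon,t)$, as well as $E$ and $E'$, are unchanged under $y\mapsto y+1$, I may assume $y\in[0,1)$. The substitution $(p,q)\mapsto(p+q\alpha,q)$ then identifies the set counted by $N_{\alpha,y}(\varepsilon,t)$ with $\Lambda\cap B$, where $\Lambda:=\bigl(\begin{smallmatrix}1&\alpha\\0&1\end{smallmatrix}\bigr)\IZ^{2}$; here $\mathrm{vol}(B)=\varepsilon t$, and because (\ref{eq: assumptions on varepsilon t and alpha}) forces $\varepsilon<\sqrt{\alpha}<1<4<t$ one has $\min\{\varepsilon,t\}=\varepsilon$ and hence $T=\sqrt{t/\varepsilon}$. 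The snag is that $\Lambda$, and likewise $\Lambda^{\perp}$, fails to be weakly admissible (each meets a coordinate axis in a rank-one sublattice), so Theorem~\ref{thm: inhomogenous Skriganov} does not apply to $\Lambda$ itself.

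To remedy this I would shear $\Lambda$: put $\Gamma:=\bigl(\begin{smallmatrix}1&0\\\delta&1\end{smallmatrix}\bigr)\Lambda$ for a carefully chosen $\delta\in(0,\tfrac12)$. Then $\Gamma$ is unimodular of rank $2$, so by Proposition~\ref{lem: lattices where nu fncs. equal} (in the form recorded right after it, which applies to every unimodular lattice in $\IR^{2}$) we have $\nu(\Gamma^{\perp},\cdot)=\nu(\Gamma,\cdot)$, and $\Gamma^{\perp}$ is weakly admissible as soon as $\Gamma$ is. I would pick $\delta$ from the Cantor set of reals for which $\delta^{-1}+\alpha$ has absolutely bounded partial quotients (such reals meet every short interval), avoiding in addition the countable set of $\delta$ for which $\Gamma$ meets the second coordinate axis; this makes $\Gamma$ weakly admissible and provides an absolute lower bound $c>0$ for $\inf_{q\ge1}q\,\Vert q(\delta^{-1}+\alpha)\Vert$. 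Two points then have to be checked. First, a point $x=\bigl(\begin{smallmatrix}u\\\delta u+q\end{smallmatrix}\bigr)\in\Gamma$ (with $u=p+q\alpha$) lies in $B$ exactly when $p+q\alpha\in[y,y+\varepsilon]$ and $q\in[-\delta u,\,t-\delta u]$, and since $0\le\delta u<2\delta<1$ this constraint on the integer $q$ differs from $0\le q\le t$ in at most one value of $q$ --- with at most one admissible $p$, as $\varepsilon<1$ --- so that $|\#(\Gamma\cap B)-N_{\alpha,y}(\varepsilon,t)|\le1$. Second, I claim $\nu(\Gamma,\rho)\gg\phi(2\rho)$ for all $\rho>\gamma_{2}^{\nicefrac{1}{2}}$: given a nonzero $x\in\Gamma$ with $\Vert x\Vert_{2}<\rho$, if $q=0$ then $|x_{1}x_{2}|=\delta p^{2}\ge\delta$; if $q\ne0$ and $\delta|u|\le\tfrac12|q|$ then $|x_{1}x_{2}|\ge\tfrac12|u||q|\ge\tfrac12\phi(|q|)\ge\tfrac12\phi(2\rho)$ by (\ref{eq: definition of phi as lower bound}) and monotonicity (note $|q|\le(1+\delta)\Vert x\Vert_{2}<2\rho$); and otherwise $|u|>\tfrac{1}{2\delta}$ while $|\delta u+q|=\delta|p+q(\delta^{-1}+\alpha)|\ge\delta c/|q|$, whence $|x_{1}x_{2}|>c/2$. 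Since $\phi\le1$ (indeed $\phi(1)\le\alpha$ by (\ref{eq: definition of phi as lower bound})), these three bounds combine to $\nu(\Gamma,\rho)=\nu(\Gamma^{\perp},\rho)\gg\phi(2\rho)$.

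Now I would apply Theorem~\ref{thm: inhomogenous Skriganov} to $\Gamma$ and $B$ with $\rho:=\mathrm{vol}(B)=\varepsilon t$, which exceeds $\gamma_{2}^{\nicefrac{1}{2}}$ since $\varepsilon t>4$. The first summand of (\ref{eq: inhomogenous Skriganov bound}) then becomes $(\mathrm{vol}B)^{1-\nicefrac{1}{2}}/\sqrt{\rho}=1$, and $T^{\star}=T$ because $T=\sqrt{t/\varepsilon}>2>\gamma_{2}$. Using the lower bound just proved together with $\rho T=\varepsilon tT=\sqrt{\varepsilon t^{3}}=t\sqrt{\varepsilon t}$ and $E=\varepsilon t/\phi(4t\sqrt{\varepsilon t})\ge\varepsilon t$, I get $\nu(\Gamma^{\perp},T^{\star})\gg\phi(2T)\ge\phi(E')$ and
\[
R=n^{2}+\log\frac{(\varepsilon t)^{2}}{\nu(\Gamma^{\perp},\rho T)}\ll\log\frac{(\varepsilon t)^{2}}{\phi(4t\sqrt{\varepsilon t})}=\log(\varepsilon t\cdot E)\ll\log E ,
\]
hence $2^{R}\ll\varepsilon t\,E$ and $2^{R}T\ll\varepsilon t\,T\,E=\sqrt{\varepsilon t^{3}}\,E$; this is precisely where the constant $168$ in the definition of $E'$ is spent, so that $2\cdot2^{R}T\le E'$ and therefore $\nu(\Gamma^{\perp},2^{R}T)\gg\phi(2\cdot2^{R}T)\ge\phi(E')$. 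Substituting back into (\ref{eq: inhomogenous Skriganov bound}) and using $\phi\le1$ and $\log E>1$ gives $|\#(\Gamma\cap B)-\varepsilon t|\ll_{\alpha}\phi(E')^{-1}\bigl(1+\log E\cdot\phi(E')^{-1}\bigr)\ll_{\alpha}\log E/\phi^{2}(E')$; combining with $|\#(\Gamma\cap B)-N_{\alpha,y}(\varepsilon,t)|\le1$ yields (\ref{eq: main estimate}).

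I expect the delicate point to be the construction of $\delta$ in the second step: one needs $\Gamma$ to be weakly admissible \emph{and} $\nu(\Gamma,\cdot)\gg\phi(2\,\cdot)$ to hold with the numerical constant $2$ and an \emph{absolute} implied constant, since it is exactly these features that force the arguments $4t\sqrt{\varepsilon t}$ of $\phi$ appearing in $E$ and $E'$ (and the constant $168$) to come out as in the statement rather than depending on $\alpha$. The remaining effort is the bookkeeping that converts the right-hand side of (\ref{eq: inhomogenous Skriganov bound}), with the choice $\rho=\mathrm{vol}(B)$, into the clean shape $\log E/\phi^{2}(E')$.
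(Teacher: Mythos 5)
Your approach is genuinely different from the paper's and does not, as written, close cleanly. The paper does not try to fix up the natural lattice $\bigl(\begin{smallmatrix}1&\alpha\\0&1\end{smallmatrix}\bigr)\IZ^{2}$ by a shear with an auxiliary parameter; instead it works from the start with the explicit unimodular lattice $\Gamma=A\IZ^{2}$, $A:=\frac{1}{\sqrt{\alpha}}\bigl(\begin{smallmatrix}1&\alpha\\1&2\alpha\end{smallmatrix}\bigr)$, and the $\alpha$-dependent aligned box $B=\frac{1}{\sqrt{\alpha}}\bigl([y,y+\varepsilon]\times[y,y+\alpha t]\bigr)$. This lattice is automatically weakly admissible (both coordinates of a nonzero lattice vector are nonzero since $\alpha\notin\IQ$), Proposition~\ref{lem: lattices where nu fncs. equal} gives $\nu(\Gamma^{\perp},\cdot)=\nu(\Gamma,\cdot)$, and the key Lemma~\ref{lem: nu lattice in dim 2 vs. Diophantine type} gives the fully explicit inequality $\nu(\Gamma^{\perp},\rho)\geq\frac{1}{4}\phi(4\rho/\sqrt{\alpha})$ with the concrete constant $\frac14$; the argument $4\rho/\sqrt{\alpha}$ and the constant $\frac14$ then propagate mechanically through $R$, $2^{R}$, and $2^{R}T/\sqrt{\alpha}$ to produce exactly $E$ and $E'=168\sqrt{\varepsilon t^{3}}E$ (one can check $2^{6+2\log 2}<168$). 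Your route instead introduces an extra degree of freedom $\delta$ and relies on finding $\delta$ so that $\delta^{-1}+\alpha$ lies in a suitable badly-approximable Cantor set.

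The genuine gap in your proposal is precisely the step you yourself flag at the end. You prove $\nu(\Gamma,\rho)\gg\phi(2\rho)$ with an implied constant of the form $c''=\min\{\delta,\,1/2,\,c/2\}$, where $c$ is the badly-approximable constant $\inf_{q\geq1}q\,\Vert q(\delta^{-1}+\alpha)\Vert$. This $c''$ sits \emph{inside the exponent} of $2^{R}$, so it cannot be absorbed into the $\ll_{\alpha}$ of the conclusion: one must actually verify $2\cdot2^{R}T\leq E'=168\sqrt{\varepsilon t^{3}}E$, and chasing the constants shows this forces $c''$ to exceed an explicit numerical threshold (roughly $c''\geq0.09$, i.e. $\delta$ not too small and $c$ not too small). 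You assert that a suitable $\delta$ exists in ``the Cantor set of reals for which $\delta^{-1}+\alpha$ has absolutely bounded partial quotients'' and that ``this is precisely where the constant $168$ is spent'', but you neither pin down the bound on the partial quotients, nor show that the corresponding Cantor set meets the admissible range of $\delta$ (it does not meet \emph{every} short interval: for a fixed bound $K$ on the partial quotients the fractional parts form a Cantor set strictly inside $(0,1)$), nor verify that a choice satisfying simultaneously $\delta\geq\text{threshold}$, $c/2\geq\text{threshold}$, weak admissibility of $\Gamma$, and $\{\delta^{-1}+\alpha\}$ landing in that Cantor set actually exists for every irrational $\alpha\in(0,1)$. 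All of this is plausible with more work, but as written it is an unfinished construction rather than a proof, and it is exactly the kind of fiddly existence argument that the paper's concrete choice of $A$ is designed to avoid.
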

In particular, 
if $\alpha$ is badly approximable then 
\begin{equation}
\left|N_{\alpha,y}(\varepsilon,t)-\varepsilon t\right|\underset{\alpha}{\ll}\log(\varepsilon t).
\end{equation}

\section{An explicit version of Skriganov's counting theorem}\label{Section: Skriganov's Counting Theorem}

Let $\Gamma\subseteq \IR^n$ be a lattice, and let $\lambda_{i}(\Gamma)$ 
denote the $i$-th successive minimum 
	of $\Gamma$ with respect to the Euclidean norm ($1\leq i\leq n$).
For $r>0$ we introduce a special
	set of diagonal matrices 
	\[
	\Delta_{r}:=\left\{ \delta\coloneqq 
	\mathrm{diag}(2^{m_{1}},\ldots,2^{m_{n}}):\,m=(m_{1},\ldots,m_{n})^{T}\in
	\mathbb{Z}^{n},\,\left\Vert m\right\Vert _{2}<r,\, \det \delta=1 \right\}, 
	\]
	and we put 
	\[
	S(\Gamma,r):=\sum_{\delta\in\Delta_{r}}(\lambda_{1}(\delta\Gamma))^{-n}.
	\]
Now we can state Skriganov's result. In fact, his result is more general,
and applies to any convex, compact polyhedron. On the other hand,
the dependency on $B$ and $\Gamma$ in the error term is not explicitly
stated in his counting result \cite[Thm. 6.1]{Skriganov}.
By carefully following his reasoning, see Remark \ref{rem: Skriganovs reasoning} below, 
we find the following explicit version of his result.
Recall that $\gamma_{n}$ denotes the Hermite constant. 
 \begin{thm} \label{thm: Skriganovs Counting Theorem} [Skriganov, 1998]
	 Let $n\geq 2$ be an integer, let $\Gamma\subseteq \IR^n$ be a unimodular lattice,
	 and let $B\subseteq \IR^n$ be an aligned box of volume $1$.
	 Suppose $\Gamma^{\perp}$ is weakly admissible,
	 and $\rho>\gamma_{n}^{\nicefrac{1}{2}}$. Then, for $t>0$,
	\begin{equation}\label{eq: Skriganovs estimate}
		\left|\#(\Gamma\cap tB)-t^{n}\right|
		\underset{n}{\ll}(\left|\partial B\right|\lambda_{n}(\Gamma))^{n}
		\cdot(t^{n-1}\rho^{-\nicefrac{1}{2}}+S(\Gamma^{\perp},r))
	\end{equation}
	where $r:=n^{2}+\log\frac{\rho^{n}} {\nu(\Gamma^{\perp},\rho)}$,
	and $\left|\partial B\right|$ denotes the surface area of $B$.
\end{thm}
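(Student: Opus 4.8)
The plan is to reconstruct Skriganov's Fourier-analytic argument while keeping track of all implied constants; the precise correspondence with \cite[Thm.~6.1]{Skriganov} is the content of Remark~\ref{rem: Skriganovs reasoning}. Since $B$ is an aligned box of volume one we may write $B=\prod_{i=1}^{n}[y_{i},y_{i}+\ell_{i}]$ with $\ell_{1}\cdots\ell_{n}=1$, so that $\chi_{tB}=\prod_{i=1}^{n}\chi_{[ty_{i},ty_{i}+t\ell_{i}]}$ is a tensor product of interval indicators. Fixing a smooth non-negative bump and a scale $\eta>0$ to be chosen, I would convolve each one-dimensional factor with a rescaled copy of the bump to get $g_{i}^{\pm}$ with $g_{i}^{-}\le\chi_{[ty_{i},ty_{i}+t\ell_{i}]}\le g_{i}^{+}$, supported within $O(\eta)$ of the interval, and satisfying the Fourier bound $|\widehat{g_{i}^{\pm}}(\xi)|\ll_{N}\min(t\ell_{i},|\xi|^{-1})(1+\eta|\xi|)^{-N}$ for every $N\ge0$. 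With $g^{\pm}:=\prod_{i}g_{i}^{\pm}$ one has $\sum_{v\in\Gamma}g^{-}(v)\le\#(\Gamma\cap tB)\le\sum_{v\in\Gamma}g^{+}(v)$, and the sandwich gap is $\ll\eta t^{n-1}|\partial B|$; the degenerate range of small $t$, in which $tB$ cannot be meaningfully smoothed, is disposed of by a direct estimate.

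The second step is Poisson summation. Unimodularity of $\Gamma$ gives $\sum_{v\in\Gamma}g^{\pm}(v)=\sum_{w\in\Gamma^{\perp}}\widehat{g^{\pm}}(w)$ with $\widehat{g^{\pm}}(w)=\prod_{i}\widehat{g_{i}^{\pm}}(w_{i})$. The zero frequency term is $\int g^{\pm}=\prod_{i}(t\ell_{i}+O(\eta))=t^{n}+O(\eta t^{n-1}|\partial B|)$, using $\prod\ell_{i}=1$; this is the main term, and, together with the sandwich gap, it contributes $O(\eta t^{n-1}|\partial B|)$ to the error. It remains to bound
\[
E^{\pm}:=\sum_{w\in\Gamma^{\perp}\setminus\{0\}}\bigl|\widehat{g^{\pm}}(w)\bigr|\ll_{n,N}\sum_{w\in\Gamma^{\perp}\setminus\{0\}}\prod_{i=1}^{n}\min(t\ell_{i},|w_{i}|^{-1})(1+\eta|w_{i}|)^{-N}
\]
and to choose $\eta$.

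The core of the proof is the estimate of $E^{\pm}$, which I would split at $\|w\|_{2}=\rho$. For the low frequencies $0<\|w\|_{2}<\rho$: weak admissibility of $\Gamma^{\perp}$ forces every coordinate $w_{i}$ to be non-zero and, by \eqref{mufunction}, $|w_{1}\cdots w_{n}|\ge\nu(\Gamma^{\perp},\rho)$; combined with $|w_{i}|<\rho$, a careful analysis --- the substance of Remark~\ref{rem: Skriganovs reasoning} --- shows that, after grouping the $w$ by their dyadic type ($2^{m_{i}}\le|w_{i}|<2^{m_{i}+1}$) and recentring so that $\sum_{i}m_{i}=0$, the relevant diagonal matrices lie in $\Delta_{r}$ with exactly $r=n^{2}+\log(\rho^{n}/\nu(\Gamma^{\perp},\rho))$; the number of lattice points of a given type is then controlled by the corresponding $\lambda_{1}(\delta\Gamma^{\perp})^{-n}$, and summing the resulting weighted blocks (using $\prod t\ell_{i}=1$) bounds the low-frequency part by $\ll_{n}(|\partial B|\lambda_{n}(\Gamma))^{n}S(\Gamma^{\perp},r)$ --- the power $\lambda_{n}(\Gamma)^{n}\asymp_{n}\lambda_{1}(\Gamma^{\perp})^{-n}$ entering through the smallest scale of $\Gamma^{\perp}$ (the $\delta=I$ term of $S$) and the powers of $|\partial B|$ through the anisotropy of $B$. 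For the high frequencies $\|w\|_{2}\ge\rho$, taking $\eta$ of order $\rho^{-1/2}$ makes the decay factor $(1+\eta|w_{i}|)^{-N}$ effective in the largest coordinate, which satisfies $|w_{i}|\gg\rho/\sqrt{n}$; combined with a Parseval/Cauchy--Schwarz estimate in one coordinate direction (which turns the remaining $\sum\prod$ into $t^{n-1}$ via $\prod\ell_{i}=1$ and contributes the lattice factor through the spacing of $\Gamma^{\perp}$) this yields a tail $\ll_{n}(|\partial B|\lambda_{n}(\Gamma))^{n}t^{n-1}\rho^{-1/2}$. With the same $\eta\asymp\rho^{-1/2}$ the earlier $O(\eta t^{n-1}|\partial B|)$ error is also $\ll_{n}(|\partial B|\lambda_{n}(\Gamma))^{n}t^{n-1}\rho^{-1/2}$ (since $\lambda_{n}(\Gamma)\gg_{n}1$ for unimodular $\Gamma$), and adding the three contributions gives \eqref{eq: Skriganovs estimate}.

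I expect the main obstacle to be the quantitative bookkeeping in the low-frequency step: proving that weak admissibility of $\Gamma^{\perp}$ confines the dyadic data to $\Delta_{r}$ with \emph{precisely} $r=n^{2}+\log(\rho^{n}/\nu(\Gamma^{\perp},\rho))$ --- the step that pins down both the additive constant $n^{2}$ and the logarithmic term --- while simultaneously keeping the geometric factor no worse than $(|\partial B|\lambda_{n}(\Gamma))^{n}$ and ensuring the dual Fourier series converges with a bound uniform in $\Gamma$ beyond this factor. The companion delicacy is extracting the exact decay $\rho^{-1/2}$ (rather than, say, $\rho^{-1}$, or a power of $t$ larger than $t^{n-1}$) in the high-frequency tail from the interplay of $\eta$, the partial Parseval estimate, and the normalisation $\prod\ell_{i}=1$. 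These are exactly the places where \cite[Thm.~6.1]{Skriganov} leaves the dependence on $B$ and $\Gamma$ implicit and where a careful re-examination of the argument is needed.
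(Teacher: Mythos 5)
Your plan follows the same Fourier--Poisson route the paper uses, but there is an important structural difference: the paper does not reconstruct the Fourier analysis from scratch. Remark~\ref{rem: Skriganovs reasoning} simply runs Skriganov's published argument --- his Lemma~11.1 for the mollified Poisson summation, the decomposition of $\mathcal{R}_{\tau}^{\pm}$ via his~(12.7) into a partial sum $\mathcal{A}_{\tau,\rho}^{\pm}$ and a remainder $\mathcal{B}_{\tau,\rho}^{\pm}$, the bound~(12.12) of the partial sum by $S(\Gamma^{\perp},r)$, and the bound~(12.14) of the remainder --- and merely audits the dependence on $B$ and $\Gamma$ in each constant. In particular, the ``careful analysis'' you defer to, which pins down the cutoff $r=n^{2}+\log(\rho^{n}/\nu(\Gamma^{\perp},\rho))$, is in the paper a citation of Skriganov's~(12.12)--(12.13) together with his Lemma~10.1, where $n^{2}$ is a crude upper bound for twice the diameter of the Dirichlet--Voronoi cell of the logarithmic lattice from his~(7.4). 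If your aim is a self-contained proof, this step is not bookkeeping but carries essentially the full weight of Skriganov's theorem, and your sketch does not supply it.

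Two more concrete remarks. For the high-frequency tail $\Vert w\Vert_{2}>\rho$, your ``Parseval/Cauchy--Schwarz estimate in one coordinate direction'' is not what the paper does and is too vague to check. The paper's argument is elementary: the pointwise bound $\bigl|\tilde{\chi}(\mathcal{O}_{\tau}^{\pm},\gamma)\bigr|\ll|\partial B|\,t^{n-1}\Vert\gamma\Vert_{2}^{-1}$ and the mollifier decay $\tilde{\omega}(\tau\gamma)\ll_{A}(\tau\Vert\gamma\Vert_{2})^{-A}$ reduce the remainder to $|\partial B|\,t^{n-1}\tau^{-A}\sum_{\Vert\gamma\Vert_{2}>\rho/8}\Vert\gamma\Vert_{2}^{-A-1}$; the tail sum is controlled by dyadic shells, the count $\#\{\gamma\in\Gamma^{\perp}:\Vert\gamma\Vert_{2}<R\}\ll_{n}(R/\lambda_{1}(\Gamma^{\perp})+1)^{n}$, and Mahler's relation~(\ref{eq: Mahler's relation}), yielding $\lambda_{n}^{n}(\Gamma)\rho^{n-A-1}$; the choices $A=2n-1$ and $\tau=\rho^{-1/2}$ then give exactly $t^{n-1}\rho^{-1/2}$. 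I would replace your Parseval heuristic with this, since it is not clear how your scheme simultaneously produces the factor $t^{n-1}$ and the exact exponent $\rho^{-1/2}$ without balancing $\tau$ against an explicit decay exponent $A$. Finally, the prefactor $(|\partial B|\lambda_{n}(\Gamma))^{n}$ is not generated organically by the low-frequency block: in the paper the low-frequency estimate is simply $\ll_{n}S(\Gamma^{\perp},r)$, and the uniform prefactor is inserted after the fact using $|\partial B|\ge 1$ and $\lambda_{n}(\Gamma)\gg_{n}1$. Attributing it to the ``anisotropy of $B$'' or to ``the smallest scale of $\Gamma^{\perp}$'' in that step would send you looking for a dependence that isn't actually there.
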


\begin{rem}\label{rem: Skriganovs reasoning}
	The references and notation in this remark are the same as in \cite{Skriganov}.
	Put $\mathcal{O}\coloneqq tB$, fix a mollifier $\omega$ as in (11.3),
	and denote by $\tilde{\chi}(\mathcal{O},\cdot)$ the Fourier transform
	of the characteristic function $\chi(\mathcal{O},\cdot)$ of $\mathcal{O}$.
	Skriganov applies Lemma 11.1 to the error term 
	\[
	R(\mathcal{O},\Gamma)\coloneqq\sup_{X\in\IR^{n}}\left|
	\#((\mathcal{O}+X)\cap\Gamma)-\mathrm{vol}(\mathcal{O})\right|
	\]
	to estimate it by
	\[
	R(\mathcal{O},\Gamma)\leq\mathrm{vol}(\mathcal{O}_{\tau}^{+})
	-\mathrm{vol}(\mathcal{O}_{\tau}^{-})+\sup_{X\in\IR^{n}}
	\bigl(
		\bigl|\mathcal{R}_{\tau}^{+}(\mathcal{O},X)
		\bigr|+\bigl|\mathcal{R}_{\tau}^{-}(\mathcal{O},X)\bigr|
	\bigr)
	\]
	where $\mathcal{O}_{\tau}^{\pm}$
	is a $\tau$-coapproximation\footnote{Given a compact region $\mathcal{O}\subseteq\IR^{n}$ and a
	real number $\tau>0$, compact regions $\mathcal{O}_{\tau}^{\pm}$
	are called $\tau$-coapproximations to $\mathcal{O}$,
	if $\mathcal{O}_{\tau}^{-}\subseteq\mathcal{O}\subseteq\mathcal{O}_{\tau}^{+}$
	and $\mathrm{dist}(\partial\mathcal{O},\partial\mathcal{O}_{\tau}^{\pm})\geq\tau$
	are satisfied.} of $\mathcal{O}$, and $\mathcal{R}_{\tau}^{\pm}$ are the Fourier series 
	\[
	\mathcal{R}_{\tau}^{\pm}(\mathcal{O},X)\coloneqq\sum_{\gamma\in\Gamma^{\perp}
	\setminus
	\left\{ 0\right\} }\tilde{\chi} (\mathcal{O}_{\tau}^{\pm},\gamma)\tilde{\omega}
	(\tau\gamma)e^{-2\pi i\left\langle \gamma,X	\right\rangle }
	\]
	defined in (11.5) where $\tilde{\omega}$ denotes the Fourier transform of $\omega$.
	Observe that $\vert \partial B \vert\geq 1$, and that without loss of generality
	$B$ is centred at the origin, i.e., $y=-\frac{1}{2}(t_{1},\ldots,t_{n})^T$.
	Hence, we can choose
	$\mathcal{O}_{\tau}^{\pm}\coloneqq(t\pm\left|\partial B\right|\tau)B$ with $0< \tau < 1$,
	and thus
	\[
	\mathrm{vol}(\mathcal{O}_{\tau}^{+})-\mathrm{vol}
	(\mathcal{O}_{\tau}^{-})\underset{n}		 
	{\ll}\left|\partial B\right|^{n}t^{n-1}\tau.
	\]
	As noted in (6.6), since $B$ is an aligned box, the average
	$S(\Gamma_{\mathfrak{f}},\cdot)$ simplifies to $S(\Gamma^{\perp},\cdot)$,
	and $\nu(\Gamma_{\mathfrak{f}}^{\perp},\cdot)=\nu(\Gamma^{\perp},\cdot)$ 
	for each flag of faces $\mathfrak{f}$ of $B$.\\
	Now $\mathcal{R}_{\tau}^{\pm}$ is decomposed via (12.7) into
	partial sums $\mathcal{A}_{\tau,\rho}^{\pm}$ plus remainder terms 
	$\mathcal{B}_{\tau,\rho}^{\pm}$
	which are defined in (12.8) and (12.9), respectively. 
	Let $\omega_{2}$ denote the Fourier transform of $\omega_{1}$ (cf. p. 57). Due to (12.12),
	there is a constant $c=c(\omega_{1},\omega_{2})$,
	independent of $\Gamma,t,\rho,\tau$,
	such that\footnote{Conceivably, we should mention a
	typo regarding the definition of $r_{\mathfrak{f}}$
	in (6.5): $r_{\mathfrak{f}}$ is to be taken as in (12.13). In (12.13)
	$\varkappa_{n}$ denotes $\tau_{n}$ from Lemma 10.1, which was defined in (7.4)
	as two times the diameter of the Dirichlet-Voronoi region of the
	lattice $M$ defined in (3.3). It is easy to see that $2\tau_{n}<n^{2}$.} 
	\[
	\max_{X\in\IR^{n}}\mathcal{A}_{\tau,\rho}^{\pm}(\mathcal{O},X)
	\leq cS(\Gamma^{\perp},r)
	\]
	where we may choose $r$ to be
	\[
	r \coloneqq n^2+
		\log\frac{\rho^{n}}{\nu(\Gamma^{\perp},\rho)}.
	\]
	Hence, $c$ depends in fact only on the (fixed) mollifier $\omega_{1}$.
	Furthermore, $\mathcal{B}_{\tau,\rho}^{\pm}(\mathcal{O},X)$
	is estimated in (12.14) by
	\[
	\max_{X\in\IR^{n}}
	\bigl|
		\mathcal{B}_{\tau,\rho}^{\pm}(\mathcal{O},X)
	\bigr|	
	\leq\frac{c_{A}}{2\pi}
		\left|\partial B
		\right|t^{n-1}\tau^{-A}
	\sum_{
		\underset{\left\Vert \gamma\right\Vert _{2}>\frac{1}{8}\rho
		 }
		 {
		 \gamma\in\Gamma^{\perp}}
		 }
		\left\Vert\gamma\right\Vert _{2}^{-A-1}
	\]
	where $A>n$. Note that for $R>0$
	\[
	\#\left\{ \gamma\in\Gamma^{\perp}:\,\left\Vert \gamma\right\Vert _{2}<R\right\} 
	\underset{n}{\ll}(R/\lambda_{1}(\Gamma^{\perp})+1)^{n}.
	\]
	This in turn implies that for $k\in \mathbb{N}_{0}$ we have 
	\[
	\#\left\{ \gamma\in\Gamma^{\perp}:\,2^{k}\leq\left\Vert \gamma\right\Vert _{2}<2^{k+1}
	\right\} \underset{n}{\ll}(2^{k+1}/\lambda_{1}(\Gamma^{\perp}))^{n}.
	\]
	Using dyadic summation, and Mahler's relations 
	\begin{equation}\label{eq: Mahler's relation}
		1\leq \lambda_i(\Gamma^{\perp})\lambda_{n+1-i}(\Gamma)\leq n! \qquad (i=1,\ldots,n)
	\end{equation} yields 
	\[
	\sum_{\underset{\left\Vert \gamma\right\Vert _{2}>
	\frac{1}{8}\rho}{\gamma\in\Gamma^{\perp} }}\left\Vert \gamma\right\Vert _{2}^{-A-1}
	\underset{n}{\ll}\sum_{k>\left\lfloor
	\frac{\log(8^{-1}\rho)}{\log2}\right\rfloor }
	2^{(k+1)n}\lambda_{1}^{-n}(\Gamma^{\perp})		
	\cdot2^{-(A+1)k}\underset{n}{\ll}\lambda_{n}^{n}(\Gamma)\rho^{n-A-1}.
	\]
	Hence, 
	\[
	\bigl|\mathcal{R}_{\tau}^{\pm}(\mathcal{O},X)\bigr|
	\underset{n}{\ll}cS(\Gamma^{\perp},r)
	+c_{A}\left|\partial B\right|t^{n-1}\tau^{-A}\lambda_{n}^{n}(\Gamma)\rho^{n-A-1}.
	\]
	Specialising $A\coloneqq2n-1$ implies 
	\begin{align*}
	R(\mathcal{O},\Gamma) & \underset{n}{\ll}
	\left|\partial B\right|^{n}t^{n-1}\tau 
	+ S(\Gamma^{\perp},r)
	+\left|\partial B \right|^{n} t^{n-1} \tau^{1-2n}\lambda_{n}^{n}(\Gamma)\rho^{-n}\\
 	& \underset{n}{\ll}(\left|\partial B\right|\lambda_{n}(\Gamma))^{n}
 	(t^{n-1}\tau+ S(\Gamma^{\perp},r)+t^{n-1}\tau^{1-2n}\rho^{-n})
	\end{align*}
	where in the last inequality we used the obvious fact $|\partial B|\geq 1$.
	Finally, choosing $\tau\coloneqq\rho^{-\nicefrac{1}{2}}$ gives the
	required estimate.
\end{rem}

For proving Theorem \ref{thm: inhomogenous Skriganov}, we want to
exploit Theorem \ref{thm: Skriganovs Counting Theorem}. To this end
let $\overline{t}\coloneqq (\det\mathcal{T})^{\nicefrac{1}{n}}$, and let
\begin{equation}
U\coloneqq\overline{t}\mathcal{T}^{-1}.\label{eq: def. of U}
\end{equation} Thus,
\[
\#(\Gamma\cap B)=\#(U\Gamma\cap U(\mathcal{T}\left[0,1\right]^{n}+y))
=\#(\Lambda\cap\overline{t}(\left[0,1\right]^{n}+\mathcal{T}^{-1}(y)))
\]
where $\Lambda\coloneqq U\Gamma$. Moreover,
we conclude by Theorem \ref{thm: Skriganovs Counting Theorem} that
\begin{equation}
\left| \#(\Gamma\cap B)- \mathrm{vol}(B) \right| 
\underset{n}{\ll}\lambda_{n}^{n}(\Lambda)
\biggl(\frac{\overline{t}^{n-1}}{\sqrt{\rho}}
+S(\Lambda^{\perp},r)\biggr).\label{eq: homog. Skriganov app. to inhomog. box}
\end{equation}
For controlling the quantities on the right hand side in terms of
$\Gamma$, $\overline{t}$, $\rho$, and $\nu(\Gamma^{\perp},\cdot)$, 
we need two lemmata. 
We will frequently use the fact that if $\Gamma=A\IZ^n$ is unimodular then $\Gamma^\perp=(A^{-1})^T\IZ^n$.
As usual, we let $\mathrm{SL}_{n}(\mathbb{R})$
denote the group of all $\mathbb{R}^{n\times n}$ matrices with determinant $1$.
\begin{lem} \label{lem: estimate nu function vs. n-th successive minimum}
	Let $D\coloneqq\mathrm{diag}(d_{1},\ldots,d_{n})$ be in $\SL_{n}(\IR)$,
	and $\rho > \gamma_{n}^{\nicefrac{1}{2}}$. Then,
	\begin{equation}\label{eq: shifting opnorm D from rho to lattice in nu-function}
		\nu((D\Gamma)^{\perp},\rho)
		\geq\nu(\Gamma^{\perp},\left\Vert D\right\Vert_{2}\rho),
	\end{equation}
	and
	\begin{equation}\label{eq: first minimum vs. nu of lattice}
		\lambda_{1}^{n}(D\Gamma)\underset{n}{\gg}
		\nu(\Gamma,\left\Vert D^{-1}\right\Vert_{2}^{\star} ).
	\end{equation}
\end{lem}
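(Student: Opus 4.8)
The plan is to prove the two inequalities separately, both by unwinding the definition \eqref{mufunction} of $\nu$ and tracking how a diagonal scaling $D\in\SL_n(\IR)$ interacts with (i) the product $|x_1\cdots x_n|$ of coordinates, which is \emph{invariant} under $D$ since $\det D=1$, and (ii) the Euclidean norm, which is distorted by at most $\|D\|_2$ (resp.\ $\|D^{-1}\|_2$). For \eqref{eq: shifting opnorm D from rho to lattice in nu-function}, I would start from the identity $(D\Gamma)^\perp = D^{-1}\Gamma^\perp$ (valid because $D$ is symmetric, so $(D^{-1})^T=D^{-1}$, and more generally $(M\Gamma)^\perp=(M^{-1})^T\Gamma^\perp$). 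Given any $x\in(D\Gamma)^\perp$ with $0<\|x\|_2<\rho$, write $x=D^{-1}w$ with $w\in\Gamma^\perp$; then $|x_1\cdots x_n| = |d_1^{-1}\cdots d_n^{-1}|\,|w_1\cdots w_n| = |w_1\cdots w_n|$ because $\det D = d_1\cdots d_n = 1$, and $\|w\|_2 = \|Dx\|_2 \le \|D\|_2\,\|x\|_2 < \|D\|_2\,\rho$. Hence every competitor in the minimum defining $\nu((D\Gamma)^\perp,\rho)$ corresponds to a competitor in the (possibly larger) set defining $\nu(\Gamma^\perp,\|D\|_2\rho)$ with the same value of the objective, which gives the claimed inequality. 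One should check the ball in the larger problem is genuinely nonempty, i.e.\ that $\|D\|_2\rho>\gamma_n^{1/2}$: this holds since $\|D\|_2\ge 1$ (a determinant-one diagonal matrix cannot have all diagonal entries of modulus $<1$) and $\rho>\gamma_n^{1/2}$.

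For \eqref{eq: first minimum vs. nu of lattice}, the idea is that a shortest nonzero vector $v\in D\Gamma$ realizing $\lambda_1(D\Gamma)$ gives, after writing $v=Du$ with $u\in\Gamma$, a vector whose coordinate product is again unchanged, $|v_1\cdots v_n| = |u_1\cdots u_n|$, while $\|u\|_2 = \|D^{-1}v\|_2 \le \|D^{-1}\|_2\,\lambda_1(D\Gamma)$. The subtlety is that $\nu(\Gamma,\cdot)$ is only defined for arguments exceeding $\gamma_n^{1/2}$, which is why the statement uses $\left\Vert D^{-1}\right\Vert_2^{\star}=\max\{\gamma_n,\|D^{-1}\|_2\}$; I would therefore split into two cases. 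If $\lambda_1(D\Gamma)^n \ge \nu(\Gamma,\|D^{-1}\|_2^{\star})$ there is nothing to prove (the implied constant can be taken $\le 1$). Otherwise $\lambda_1(D\Gamma)$ is small, and the key geometric input is Minkowski's first theorem (or the definition of $\gamma_n$): for any unimodular lattice $\Lambda$ one has $\lambda_1(\Lambda)\le\gamma_n^{1/2}$, so applied to the unimodular lattice $D\Gamma$ we get $\lambda_1(D\Gamma)\le\gamma_n^{1/2}\le\gamma_n^{1/2}\cdot\|D^{-1}\|_2^{\star 1/2}$-ish bounds; more directly, $\|u\|_2\le\|D^{-1}\|_2\lambda_1(D\Gamma)\le\|D^{-1}\|_2\gamma_n^{1/2}$, and one wants this $<\|D^{-1}\|_2^{\star}$, which needs a clean comparison. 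Because $\lambda_1(D\Gamma)\le\gamma_n^{1/2}$ and $\gamma_n^{1/2}\le\|D^{-1}\|_2^{\star}$ whenever $\|D^{-1}\|_2^{\star}\ge\gamma_n\ge\gamma_n^{1/2}$ (using $\gamma_n\ge 1$, which follows from \eqref{Hermite} being an upper bound but one also knows $\gamma_n\ge 1$ directly for $n\ge 2$), we can enlarge the radius: $u\in\Gamma$ with $0<\|u\|_2\le\|D^{-1}\|_2\gamma_n^{1/2} < \|D^{-1}\|_2^{\star}$ if $\|D^{-1}\|_2\ge 1$, and with $\|u\|_2<\gamma_n^{1/2}\cdot\|D^{-1}\|_2\le\gamma_n < \|D^{-1}\|_2^{\star}=\gamma_n$ — here the borderline case $\|D^{-1}\|_2\le 1$ needs care. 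In fact $\|D^{-1}\|_2\ge 1$ always, by the same determinant-one argument applied to $D^{-1}$, so the star is only active when $\|D^{-1}\|_2<\gamma_n$, and then $\|u\|_2\le\|D^{-1}\|_2\gamma_n^{1/2}<\gamma_n^{1/2}\gamma_n\le\gamma_n$ provided $\gamma_n^{1/2}\le\gamma_n^{1/2}$... I would phrase this as: $\|u\|_2 \le \|D^{-1}\|_2^{\star}\,\gamma_n^{1/2}$ and then absorb the factor $\gamma_n^{1/2}$ into the implied constant by a standard covering/pigeonhole argument on how $\nu$ changes when the radius is multiplied by a bounded constant — namely, $\nu(\Gamma,c\rho)\ge c^{?}\nu(\Gamma,\rho)$-type estimates, or more simply just note $\nu$ is non-increasing in $\rho$ so shrinking the radius only increases $\nu$, and one wants $\|u\|_2<\|D^{-1}\|_2^\star$ directly.

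Let me restate the clean version: since $\|D^{-1}\|_2\ge 1$, we have $\|D^{-1}\|_2^{\star}=\max\{\gamma_n,\|D^{-1}\|_2\}\ge\|D^{-1}\|_2$ and also $\ge\gamma_n\ge\gamma_n^{1/2}$. In the nontrivial case $\lambda_1(D\Gamma)^n<\nu(\Gamma,\|D^{-1}\|_2^{\star})$, by Minkowski $\lambda_1(D\Gamma)\le\gamma_n^{1/2}$, so $\|u\|_2\le\|D^{-1}\|_2\gamma_n^{1/2}$. If this quantity is $<\|D^{-1}\|_2^{\star}$ we are done since then $u$ is admissible in \eqref{mufunction} for radius $\|D^{-1}\|_2^{\star}$ and $|u_1\cdots u_n|=\lambda_1(D\Gamma)^n\cdot(\text{adjustment})$ — wait, $|v_1\cdots v_n|$ is generally \emph{not} $\lambda_1(D\Gamma)^n$, only $\le(\lambda_1(D\Gamma)/\sqrt n)^n\cdot n^{n/2}$ by AM–GM; but that is the wrong direction. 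The correct point is that $|u_1\cdots u_n| \le \|u\|_2^n/n^{n/2}\cdot n^{n/2}$, hmm, rather $|u_1\cdots u_n|\le (\|u\|_\infty)^n\le\|u\|_2^n\le(\|D^{-1}\|_2\lambda_1(D\Gamma))^n$, so $\nu(\Gamma,\|D^{-1}\|_2^{\star})\le|u_1\cdots u_n|\le\|D^{-1}\|_2^n\lambda_1^n(D\Gamma)$, which is the \emph{wrong} inequality. So instead one must argue: $\nu(\Gamma,\|D^{-1}\|_2^{\star})\le|u_1\cdots u_n|$ is wrong too unless $\|u\|_2<\|D^{-1}\|_2^{\star}$. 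I think the honest route is: take $u\in\Gamma$ realizing $\nu(\Gamma,\|D^{-1}\|_2^{\star})$, i.e.\ $0<\|u\|_2<\|D^{-1}\|_2^{\star}$ and $|u_1\cdots u_n|=\nu(\Gamma,\|D^{-1}\|_2^{\star})$; then $v:=Du\in D\Gamma$ has $|v_1\cdots v_n|=|u_1\cdots u_n|=\nu(\Gamma,\|D^{-1}\|_2^{\star})$ and $\|v\|_2\le\|D\|_2\|u\|_2$. This bounds $\nu$ of $D\Gamma$ but we want $\lambda_1$. The main obstacle is exactly this mismatch between a \emph{product} quantity $\nu$ and a \emph{norm} quantity $\lambda_1$; the resolution must use that $\lambda_1(D\Gamma)^n\ge\nu(D\Gamma,\gamma_n^{1/2}+\epsilon)$-type relations together with \eqref{eq: shifting opnorm D from rho to lattice in nu-function} applied with roles of $\Gamma$ and $D^{-1}$, or a direct Minkowski argument bounding $|v_1\cdots v_n|$ from below by $\lambda_1^n$ on a suitable sublattice. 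Concretely, I expect the argument runs: pick $v\in D\Gamma$ with $\|v\|_2=\lambda_1(D\Gamma)$; then $u=D^{-1}v\in\Gamma\setminus\{0\}$ satisfies $\|u\|_2\le\|D^{-1}\|_2\lambda_1(D\Gamma)$; if $\lambda_1(D\Gamma)^n\ge\nu(\Gamma,\|D^{-1}\|_2^{\star})$ we are done; otherwise $\lambda_1(D\Gamma)\le\gamma_n^{1/2}$ (Minkowski), whence $\|u\|_2\le\gamma_n^{1/2}\|D^{-1}\|_2\le\gamma_n^{1/2}\|D^{-1}\|_2^{\star}$, and since $\nu(\Gamma,\cdot)$ is monotone this gives $|u_1\cdots u_n|\ge\nu(\Gamma,\gamma_n^{1/2}\|D^{-1}\|_2^{\star})\gg_n\nu(\Gamma,\|D^{-1}\|_2^{\star})$ — the last step being a standard fact that multiplying the radius by the fixed constant $\gamma_n^{1/2}$ changes $\nu$ by at most a dimension-dependent factor, which itself follows from \eqref{eq: shifting opnorm D from rho to lattice in nu-function} with $D$ a scalar-like diagonal matrix (or is proved inline by a short geometric-of-numbers estimate). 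Finally $\lambda_1^n(D\Gamma)\ge\|u\|_2^n/\|D^{-1}\|_2^n\ge|u_1\cdots u_n|/\|D^{-1}\|_2^n$ is again the wrong way; so the genuinely correct chain is $\lambda_1^n(D\Gamma)=\|v\|_2^n\ge|v_1\cdots v_n|=|u_1\cdots u_n|\ge\nu(\Gamma,\|u\|_2+)\ge\nu(\Gamma,\gamma_n^{1/2}\|D^{-1}\|_2^{\star})\gg_n\nu(\Gamma,\|D^{-1}\|_2^{\star})$, using $|v_1\cdots v_n|\le\|v\|_\infty^n\le\|v\|_2^n$ — and \emph{that} inequality $|v_1\cdots v_n|\le\|v\|_2^n$ is the crucial elementary one, valid since each $|v_i|\le\|v\|_2$. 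That closes the argument; the delicate point to get exactly right is the radius-rescaling factor and the activation of the $\star$.
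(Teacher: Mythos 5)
Your proof of \eqref{eq: shifting opnorm D from rho to lattice in nu-function} is correct and follows the same route as the paper: use $(D\Gamma)^\perp=D^{-1}\Gamma^\perp$, the $\det D=1$ invariance of $|x_1\cdots x_n|$, and the inclusion of constraint sets under $\|w\|_2\leq\|D\|_2\|D^{-1}w\|_2$.

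The proof of \eqref{eq: first minimum vs. nu of lattice}, however, has a genuine gap in its final step. You reduce to a minimizer $v$ of $\lambda_1(D\Gamma)$, set $u=D^{-1}v$, invoke Minkowski to get $\lambda_1(D\Gamma)\leq\gamma_n^{1/2}$, and obtain $\|u\|_2\leq\gamma_n^{1/2}\|D^{-1}\|_2$. This is \emph{not} $<\|D^{-1}\|_2^\star$ when $\|D^{-1}\|_2\geq\gamma_n$ (then $\|D^{-1}\|_2^\star=\|D^{-1}\|_2$, which is strictly smaller than $\gamma_n^{1/2}\|D^{-1}\|_2$), and your attempted repair
\[
\nu\bigl(\Gamma,\gamma_n^{1/2}\|D^{-1}\|_2^{\star}\bigr)\underset{n}{\gg}\nu\bigl(\Gamma,\|D^{-1}\|_2^{\star}\bigr)
\]
is false in general: there is no uniform ``radius-rescaling'' estimate for $\nu$. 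The function $\rho\mapsto\nu(\Gamma,\rho)$ is merely non-increasing, and can drop by an arbitrarily large factor when $\rho$ is multiplied by a fixed constant $>1$ (a new lattice point very close to a coordinate hyperplane may enter the ball of radius $\gamma_n^{1/2}\rho$ that was absent from the ball of radius $\rho$); indeed the whole point of the weakly admissible setting is that $\nu(\Gamma,\cdot)$ can decay to $0$ at an essentially arbitrary rate. The scaling inequality \eqref{eq: shifting opnorm D from rho to lattice in nu-function} does not help here either, since there is no diagonal scalar matrix in $\SL_n(\IR)$ other than $\pm I$.

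The paper avoids this entirely by not invoking Minkowski and not specialising to the minimizer: one shows directly that $\|Dv\|_2^n\gg_n\nu(\Gamma,\|D^{-1}\|_2^{\star})$ for \emph{every} nonzero $v\in\Gamma$, by splitting on $\|v\|_2\leq Q$ versus $\|v\|_2>Q$ with $Q:=\|D^{-1}\|_2$. In the first regime, AM--GM and $\det D=1$ give $\|Dv\|_2^n\geq n^{n/2}\mathrm{Nm}(Dv)=n^{n/2}\mathrm{Nm}(v)\geq n^{n/2}\nu(\Gamma,Q^\star)$ (here $\|v\|_2\leq Q\leq Q^\star$ with no spurious $\gamma_n^{1/2}$). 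In the second regime, $\|Dv\|_2>\|v\|_2/\|D^{-1}\|_2>1$, and the single soft fact $\nu(\Gamma,\gamma_n)\ll_n 1$ (again AM--GM, applied to a shortest vector of $\Gamma$) bounds $1\gg_n\nu(\Gamma,Q^\star)$. Taking the minimum over $v$ then yields \eqref{eq: first minimum vs. nu of lattice}. If you want to keep your minimizer-based framing, the correct salvage is: in the non-trivial case $\lambda_1^n(D\Gamma)<\nu(\Gamma,\|D^{-1}\|_2^\star)\leq\nu(\Gamma,\gamma_n)\leq(\gamma_n/n)^{n/2}<1$, hence $\lambda_1(D\Gamma)<1$ (not just $\leq\gamma_n^{1/2}$), which forces $\|u\|_2<\|D^{-1}\|_2\leq\|D^{-1}\|_2^\star$ and closes the argument without any radius rescaling.
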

\begin{proof}
	For $v\coloneqq(v_{1},\ldots,v_{n})^{T}\in\mathbb{R}^n$
	define $\mathrm{Nm}(v)\coloneqq \left| v_{1} \cdots v_{n}\right|$. We remark that 
	\begin{align*}
		\nu((D\Gamma)^{\perp},\rho) & =\nu(D^{-1}\Gamma^{\perp},\rho)\\
	 	& =\min\left\{ \mathrm{Nm}(D^{-1}v):\,v\in\Gamma^{\perp},\,
	 	0<\bigl\Vert D^{-1}v\bigr\Vert_{2} <\rho\right\} \\
		 & =\min\left\{ \mathrm{Nm}\,(v):\,v\in\Gamma^{\perp},\,0<\bigl\Vert D^{-1}v\bigr\Vert_{2}
	 	<\rho\right\}.
	\end{align*}
	If $ \Vert D^{-1} v \Vert_{2} < \rho $, then $\Vert v \Vert_{2} < \Vert D \Vert_{2} \rho$.
	Thus, (\ref{eq: shifting opnorm D from rho to lattice in nu-function}) follows.
	Now let $Q>0$, and $v\in\Gamma$ with $0<\left\Vert v\right\Vert _{2}\leq Q$.
	By the inequality of arithmetic and geometric mean, we have 
	\[
	\left\Vert Dv\right\Vert _{2}^{n}
	\geq n^{\nicefrac{n}{2}}\cdot\mathrm{Nm}(Dv)
	\underset{n}{\gg} \nu(\Gamma,Q^{\star}).
	\]
	Now suppose $\left\Vert v\right\Vert _{2}>Q$. Since
	$\left\Vert v\right\Vert_{2}=\left\Vert D^{-1}Dv\right\Vert_{2}
	\leq \left\Vert D^{-1}\right\Vert_{2}\left\Vert Dv\right\Vert_{2}$, 
	we conclude that
	\[
	\left\Vert Dv\right\Vert _{2}>\left\Vert D^{-1}\right\Vert_{2} ^{-1}Q.
	\]
	Hence, we have
	\[
	\left\Vert Dv\right\Vert _{2}\underset{n}{\gg}
	\min\bigl\{(\nu(\Gamma,Q^{\star}))^{\nicefrac{1}{n}},\,\left\Vert D^{-1}
	\right\Vert_{2} ^{-1}Q\bigr\}.
	\]
	Specialising $ Q\coloneqq\left\Vert D^{-1}\right\Vert_{2}$,
	and noticing that by the inequality of arithmetic and geometric mean, 
	$\nu(\Gamma,\gamma_{n})\underset{n}{\ll} 1$, we get (\ref{eq:  first minimum vs. nu of lattice}).
\end{proof}

\begin{lem} \label{lem: ergodic sum vs. nu and r} 
	Let $U$ be as in (\ref{eq: def. of U}), and let $\s\geq 1$.
	Then, we have 
	\[
	S(\Lambda^{\perp},\s)\underset{n}{\ll}
	\frac{\s^{n-1}}{\nu(\Gamma^{\perp},(2^{\s}\left\Vert U\right\Vert_{2})^{\star})}.
	\]
\end{lem}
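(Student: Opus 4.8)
The plan is to unwind the definition of $S(\Lambda^{\perp},\s)$ as a sum over $\delta\in\Delta_{\s}$ of $(\lambda_{1}(\delta\Lambda^{\perp}))^{-n}$, and to bound each term using the estimates from Lemma \ref{lem: estimate nu function vs. n-th successive minimum}. Recall $\Lambda=U\Gamma$ so $\Lambda^{\perp}=(U^{-1})^{T}\Gamma^{\perp}$; but $U=\overline t\,\mathcal T^{-1}$ is diagonal, hence so is $(U^{-1})^{T}=U^{-1}$, and therefore $\delta\Lambda^{\perp}=(\delta U^{-1})\Gamma^{\perp}$ with $\delta U^{-1}$ a diagonal matrix of determinant $1$ (since $\det\delta=1=\det U^{-1}$... wait, $\det U=\overline t^{\,n}/\det\mathcal T=1$, good). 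So I would apply the inequality \eqref{eq: first minimum vs. nu of lattice} of Lemma \ref{lem: estimate nu function vs. n-th successive minimum} with $D:=\delta U^{-1}\in\SL_{n}(\IR)$ and the lattice $\Gamma^{\perp}$ in place of $\Gamma$, giving
\[
\lambda_{1}^{n}(\delta\Lambda^{\perp})=\lambda_{1}^{n}(D\Gamma^{\perp})\underset{n}{\gg}\nu\bigl(\Gamma^{\perp},(\Vert D^{-1}\Vert_{2})^{\star}\bigr)=\nu\bigl(\Gamma^{\perp},(\Vert U\delta^{-1}\Vert_{2})^{\star}\bigr).
\]
Since $\delta=\mathrm{diag}(2^{m_1},\dots,2^{m_n})$ with $\Vert m\Vert_{2}<\s$, each $|m_i|<\s$, so $\Vert\delta^{-1}\Vert_{2}=\max_i 2^{-m_i}\le 2^{\s}$, whence $\Vert U\delta^{-1}\Vert_{2}\le\Vert U\Vert_{2}\Vert\delta^{-1}\Vert_{2}\le 2^{\s}\Vert U\Vert_{2}$. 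Because $\nu(\Gamma^{\perp},\cdot)$ is non-increasing in its second argument and $x^\star=\max\{\gamma_n,x\}$ is non-decreasing, this yields
\[
\lambda_{1}^{n}(\delta\Lambda^{\perp})\underset{n}{\gg}\nu\bigl(\Gamma^{\perp},(2^{\s}\Vert U\Vert_{2})^{\star}\bigr),
\]
a bound uniform in $\delta\in\Delta_{\s}$, so $(\lambda_{1}(\delta\Lambda^{\perp}))^{-n}\ll_{n}\nu(\Gamma^{\perp},(2^{\s}\Vert U\Vert_{2})^{\star})^{-1}$.

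It then remains to count the terms: $S(\Lambda^{\perp},\s)\le\#\Delta_{\s}\cdot\max_{\delta\in\Delta_{\s}}(\lambda_{1}(\delta\Lambda^{\perp}))^{-n}$, and since $\Delta_{\s}$ injects into the set of integer vectors $m$ with $\Vert m\Vert_{2}<\s$ lying on the hyperplane $\sum m_i=0$ (the determinant-one condition forces $\sum m_i=0$), we have $\#\Delta_{\s}\le\#\{m\in\IZ^{n-1}:\Vert m\Vert_{2}<\s\}\ll_{n}\s^{n-1}$ — here one projects onto the first $n-1$ coordinates, the last being determined, and uses $\s\ge 1$ to absorb the $+1$ in the standard lattice-point count $\#\{m\in\IZ^{n-1}:\Vert m\Vert_2<\s\}\ll_n(\s+1)^{n-1}\ll_n\s^{n-1}$. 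Combining the two estimates gives exactly
\[
S(\Lambda^{\perp},\s)\underset{n}{\ll}\frac{\s^{n-1}}{\nu\bigl(\Gamma^{\perp},(2^{\s}\Vert U\Vert_{2})^{\star}\bigr)},
\]
as claimed.

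I do not expect a serious obstacle here; the lemma is essentially a bookkeeping exercise combining Lemma \ref{lem: estimate nu function vs. n-th successive minimum} with an elementary lattice-point count. The one point requiring a little care is the operator-norm manipulation $\Vert U\delta^{-1}\Vert_{2}$: one should note that $U$ and $\delta^{-1}$ are simultaneously diagonal, so the operator norm of the product is the maximum of the products of diagonal entries, and this is cleanly bounded by $\Vert U\Vert_{2}\Vert\delta^{-1}\Vert_{2}$ (submultiplicativity suffices and one need not be sharp). A second minor point is to make sure the $\star$-operation and monotonicity of $\nu(\Gamma^{\perp},\cdot)$ are applied in the right direction so that the replacement of $(\Vert U\delta^{-1}\Vert_{2})^{\star}$ by the larger quantity $(2^{\s}\Vert U\Vert_{2})^{\star}$ only decreases $\nu$, which is what we want for an upper bound on $S(\Lambda^{\perp},\s)$.
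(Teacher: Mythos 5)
Your proof is correct and follows essentially the same route as the paper's: apply inequality \eqref{eq: first minimum vs. nu of lattice} of Lemma \ref{lem: estimate nu function vs. n-th successive minimum} with $D=\delta U^{-1}$ and the lattice $\Gamma^{\perp}$, bound $\Vert U\delta^{-1}\Vert_{2}\le 2^{\s}\Vert U\Vert_{2}$, use monotonicity of $\nu(\Gamma^\perp,\cdot)$ and of $x\mapsto x^\star$, and estimate $\#\Delta_{\s}\ll_{n}\s^{n-1}$. You have merely spelled out the bookkeeping details (the reduction $\Lambda^\perp=U^{-1}\Gamma^\perp$, the operator-norm estimate, the zero-sum constraint giving the $\s^{n-1}$ count) that the paper leaves implicit.
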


\begin{proof}
	Since $\Lambda^{\perp}=U^{-1} \Gamma^{\perp}$,
	we conclude by (\ref{eq: first minimum vs. nu of lattice}) that
	\[
	S(\Lambda^{\perp},\s)=\sum_{\delta\in\Delta_{\s}}
	\frac{1}{\lambda_{1}^{n}(\delta U^{-1} \Gamma^{\perp})}
	\underset{n}{\ll}\sum_{\delta\in\Delta_{\s}}
	\frac{1}{\nu(\Gamma^{\perp},\left\Vert U \delta^{-1}\right\Vert_{2}^{\star})}.
	\]
	Since $\#\Delta_{\s} \underset{n}{\ll} \s^{n-1}$, and  since $\nu(\Gamma^{\perp},\cdot)$
	is non-increasing, we get 
	\[
	S(\Lambda^{\perp},\s)\underset{n}{\ll}
	\frac{\s^{n-1}}{\nu(\Gamma^{\perp},(2^{\s}\left\Vert U\right\Vert_{2})^{\star} )}.\qedhere
	\]
\end{proof} 

Now we can give the proof of Theorem \ref{thm: inhomogenous Skriganov}.
\begin{proof}[Proof of Theorem \ref{thm: inhomogenous Skriganov}]
	By (\ref{eq: shifting opnorm D from rho to lattice in nu-function}),
	we conclude
	\[
	r=n^{2}+\log\frac{\rho^{n}}{\nu(\Lambda^{\perp},\rho)}
	\leq n^{2}+\log\frac{\rho^{n}}{\nu(\Gamma^{\perp},\left\Vert U\right\Vert_{2} \rho)}=R
	\]
	Since $\nu(\Lambda^{\perp},\cdot)$ is non-increasing,
	and since $(2^{R}\left\Vert U\right\Vert_{2})^\star=2^{R}\left\Vert U\right\Vert_{2}$
	Lemma \ref{lem: ergodic sum vs. nu and r} yields 
	\begin{equation}\label{eq: ergodic sum vs. R and nu}
		S(\Lambda^{\perp},r)\underset{n}{\ll}
		\frac{R^{n-1}}{\nu(\Gamma^{\perp},2^{R}\left\Vert U\right\Vert_{2})}.
	\end{equation}
	By using Mahler's relation (\ref{eq: Mahler's relation})
	and Lemma \ref{lem: estimate nu function vs. n-th successive minimum}, 
	we obtain
	\begin{equation}\label{eq: n successive minimum vs. nu dual}
		\lambda_{n}^{n}(\Lambda)\underset{n}{\ll}
		\frac{1}{\lambda_{1}^{n}(U^{-1}\Gamma^{\perp})}
		\underset{n}{\ll}\frac{1}{\nu(\Gamma^{\perp},\left\Vert U\right\Vert_{2}^{\star} )}.
	\end{equation}
	Taking (\ref{eq: ergodic sum vs. R and nu}) and (\ref{eq: n successive minimum vs. nu dual})
	in (\ref{eq: homog. Skriganov app. to inhomog. box}) into account, it follows that
	\[
	\left|\#(\Gamma\cap B)-\mathrm{vol}(B)\right|
	\underset{n}{\ll}\frac{1}
	{\nu\bigl(\Gamma^{\perp},\left\Vert U\right\Vert_{2}^{\star} \bigr)}
	\biggl(\frac{\overline{t}^{n-1}}{\sqrt{\rho}}+
	\frac{R^{n-1}}{\nu(\Gamma^{\perp},2^{R}\left\Vert U\right\Vert_{2})}\biggr)
	\]
	which is (\ref{eq: inhomogenous Skriganov bound}).
\end{proof}

\section{Comparing $\nu(\Gamma,\cdot)$ and $\nu(\Gamma^{\perp},\cdot)$}\label{Section: comparing}
A natural question is whether one can state 
Theorem \ref{thm: inhomogenous Skriganov} in a way 
that is intrinsic in $\Gamma$, i.e. expressing $\nu(\Gamma^{\perp},\cdot)$
in terms of $\nu(\Gamma,\cdot)$. 
However, for $n>2$ there are weakly admissible lattices $\Gamma\subseteq\IR^{n}$ 
such that $\Gamma^{\perp}$ is not weakly admissible as the following example shows.

\begin{example}\label{ex: Gamma weakly adm. doesn t imply Gamma_perp weakly adm}
	Let $n\geq3$, and let $A'_0\in \GL_{n-1}(\IR)$ be	such that the
	elements of each row of $A'_0$ are $\mathbb{Q}$-linearly
	independent. Choose real $x_1,\ldots,x_{n-1},y$ outside of the
	$\IQ$-span of the entries of $A'_0$, and suppose $y\neq x_{n-1}$.
	Let $x=(x_1,\ldots,x_{n-1})^T$ and let $r_{n-1}$ be the last row of $A'_0$.
	Then, the matrix
	\[
	A_0\coloneqq\begin{pmatrix}A'_0 & x\\
	r_{n-1} & y
	\end{pmatrix}
	\]
	satisfies\medskip{}

	(i) $A_0\in\GL_{n}(\IR)$, and

	(ii) the elements in each row of $A_0$ are $\mathbb{Q}$-linearly independent.
	\medskip{}

	\noindent The second assertion is clear and for the first suppose 
	a linear combination of the rows vanishes. Using that the rows of $A'_0$ 
	are linearly independent over $\IR$	and that $y\neq x_{n-1}$, the first claim follows at once.
	We now let $A$ be the matrix we get from $A_0$ by swapping
	the first and the last row, and scaling each entry with 
	$|\det A_0|^{-1/n}$. Clearly,  (i) and (ii) remain valid for $A$,
	and the $(n,n)$-minor of $A$ vanishes.
	We conclude that $ \Gamma\coloneqq A\mathbb{Z}^{n} $
	is a unimodular, and weakly admissible lattice; moreover, Cramer's rule implies that 
	\begin{equation*}
		(A^{-1})^{T}=\begin{pmatrix}\star & \star & \ldots & \star\\
		\star & \ddots & \ddots & \vdots\\
		\vdots & \ddots & \star & \star\\
		\star & \ldots & \star & 0
		\end{pmatrix}
	\end{equation*}
	where an asterisk denotes some arbitrary real number, possibly a different
	number each time. Hence, $\Gamma^{\perp}$ contains a non-zero lattice point
	with a zero coordinate, and thus is not weakly admissible. 
\end{example}

Keeping Example \ref{ex: Gamma weakly adm. doesn t imply Gamma_perp weakly adm} in mind, 
we now concern ourselves with finding large subclasses of lattices $\Gamma\subseteq\IR^{n}$ such that
\begin{enumerate}
	\item \label{enu: wishlist for lattices vs. dual lattices 1}
	$\Gamma$ and $\Gamma^{\perp}$ are both weakly admissible, 
	\item \label{enu: wishlist for lattices vs. dual lattices 2}
	$\nu(\Gamma^{\perp},\cdot)=\nu(\Gamma,\cdot)$.
\end{enumerate}

It is easy to see that the first item holds for almost all lattices in the sense of the Haar-measure
on the space $\mathcal{L}_{n}=\SL_{n}(\IR)/\SL_{n}(\IZ)$ of unimodular lattices in $\IR^{n}$.
Moreover, we have the following criterion.

\begin{lem}\label{cor: weak admissib. of dual of algb. independet lattice}
	Suppose $A\in\SL_{n}(\IR)$, and suppose that the entries of $A$
	are algebraically independent (over $\IQ$).
	Then, $\Gamma\coloneqq A\mathbb{Z}^{n}$ and $\Gamma^{\perp}$
	are both weakly admissible. 
\end{lem}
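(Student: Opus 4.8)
The plan is to reduce the weak admissibility of $\Gamma$ and $\Gamma^\perp$ to a statement about vanishing of certain polynomials in the entries of $A$, and then invoke algebraic independence. Recall that $\Gamma=A\IZ^n$ is weakly admissible if and only if no non-zero lattice point lies in a coordinate hyperplane, i.e.\ for every $0\neq m\in\IZ^n$ and every $1\leq k\leq n$ we have $(Am)_k\neq 0$. Now $(Am)_k=\sum_{j} a_{kj}m_j$ is a non-trivial $\IZ$-linear combination of the entries $a_{k1},\ldots,a_{kn}$ in the $k$-th row of $A$; since these entries are algebraically independent over $\IQ$, they are in particular $\IQ$-linearly independent, so this combination cannot vanish. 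Hence $\Gamma$ is weakly admissible. (This is exactly condition (ii) appearing in Example \ref{ex: Gamma weakly adm. doesn t imply Gamma_perp weakly adm}, and it is the easy half.)

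For $\Gamma^\perp$ we use the fact recorded just before Lemma \ref{lem: estimate nu function vs. n-th successive minimum}: since $\Gamma=A\IZ^n$ is unimodular, $\Gamma^\perp=(A^{-1})^T\IZ^n$. By Cramer's rule the entries of $(A^{-1})^T$ are, up to the sign $\det A=\pm1$, the cofactors of $A$, that is, signed $(n-1)\times(n-1)$ minors of $A$, which are integer polynomials in the $a_{ij}$. So weak admissibility of $\Gamma^\perp$ amounts to the non-vanishing, for each row index $k$ and each non-zero $m\in\IZ^n$, of a fixed $\IZ$-linear combination $\sum_j c_{kj}m_j$ of those cofactors $c_{kj}$ lying in the $k$-th row of $(A^{-1})^T$. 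Each such linear combination is again a polynomial $P$ in the entries of $A$ with integer coefficients, and the claim is that $P$ is not the zero polynomial — once we know that, algebraic independence of the $a_{ij}$ forces $P(A)\neq 0$.

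The main obstacle is therefore purely algebraic: showing that no such $P$ vanishes identically. The cleanest route is to argue that the entries in a single row of $(A^{-1})^T$ are linearly independent over $\IQ$ as \emph{polynomials} (equivalently, as rational functions) in the $a_{ij}$. Indeed, the $k$-th row of $(A^{-1})^T$ consists of the entries $(A^{-1})_{jk}=(\det A)^{-1}\cdot\operatorname{cof}_{kj}(A)$ for $j=1,\ldots,n$; a non-trivial rational relation $\sum_j c_j(A^{-1})_{jk}=0$ would say that the $k$-th column of $A^{-1}$ — a non-zero vector — is annihilated by the fixed rational covector $(c_1,\ldots,c_n)$, i.e.\ that $(c_1,\ldots,c_n)A^{-1}e_k=0$ identically in $A$. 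Writing $w=(c_1,\ldots,c_n)A^{-1}$, this forces the $k$-th coordinate of $w$ to vanish for all invertible $A$; but specialising $A$ (e.g.\ to a suitable permutation-type matrix) makes $w$ an arbitrary non-zero covector, a contradiction. Hence the polynomial $P$ above is non-zero, and evaluating at the algebraically independent tuple $(a_{ij})$ gives $P(A)\neq 0$, so $\Gamma^\perp$ is weakly admissible as well.
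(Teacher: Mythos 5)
Your proof is correct. The first part (weak admissibility of $\Gamma$) is the same short observation that the paper leaves implicit, and it is fine.

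For $\Gamma^\perp$ your route is genuinely different from the paper's. The paper argues concretely with the monomial structure of the cofactors: by Cramer's rule each entry of $(A^{-1})^T$ is, up to the common normalising factor $1/\det A$, a signed sum of pairwise distinct degree-$(n-1)$ monomials in the $a_{ij}$, and no such monomial appears in two different cofactors (a monomial in the $(k,j)$-cofactor uses exactly one entry from each row except $k$ and each column except $j$, which pins down $(k,j)$). Hence all $n^2$ cofactors — and therefore all $n^2$ entries of $(A^{-1})^T$ — are $\IQ$-linearly independent, which is in fact stronger than the row-wise statement needed. You instead show that the particular linear combination $\sum_j m_j\,\mathrm{cof}_{kj}(A)$ is a non-zero polynomial by a generic specialisation argument: if it vanished identically, then the covector $(m_1,\dots,m_n)A^{-1}$ would have zero $k$-th entry for every invertible $A$, which fails already for permutation matrices. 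Both arguments are valid; the paper's yields the cleaner structural fact about the cofactors, while yours is shorter, more "soft," and proves exactly what is required. One cosmetic remark: as stated the lemma's hypothesis is vacuous, because $\det A=1$ is a polynomial relation among the entries and is thus incompatible with their algebraic independence; both you and the paper are really using the hypothesis $A\in\GL_n(\IR)$ with algebraically independent entries (weak admissibility is scale-invariant, so this is harmless), and your cofactor argument is insensitive to this normalisation.
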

\begin{proof}
	First note that if $K$ is a field and $X_1,\ldots,X_N$ are algebraically independent 
	over $K$, then any non-empty collection of pairwise distinct monomials $X_1^{a_1}\cdots X_N^{a_N}$ 
	is linearly independent over $K$.
	Next note that by Cramer's rule, each entry of $(A^{-1})^{T}$ is
	a sum of pairwise distinct monomials (up to sign) in the entries of $A$,
	and none of these monomials occurs in more than one entry of $(A^{-1})^{T}$. 
	This shows that the entries of $(A^{-1})^{T}$ are linearly independent over $\IQ$,
	in particular, the entries of any fixed row of $(A^{-1})^{T}$ are linearly independent over $\IQ$. 
	Thus, $\Gamma^\perp$ is weakly admissible.
	\end{proof}

Next, we prove Proposition \ref{lem: lattices where nu fncs. equal}.
	Notice that $\Sm$ and $\Sm^{-1}$ are, up to signs of the entries, 
	permutation matrices, and thus for every $w\in \IR^n$
	\begin{alignat}1\label{normSm}
	&\Nm(w)=\Nm(\Sm w)=\Nm(\Sm^{-1}w),\\
	&\label{distSm}	
	\left\Vert w\right\Vert _{2}=\left\Vert \Sm 
	w\right\Vert _{2}=\left\Vert \Sm^{-1}w\right\Vert _{2}.
	\end{alignat}
	Now let $Aw$ be an arbitrary lattice point in $\Gamma=A\mathbb{Z}^{n}$.
	Then, since $\Rm\in \IZ^{n\times n}$, we get $(A^{-1})^{T}\Rm w\in \Gamma^{\perp}$. 
	Since by hypothesis $A=\Sm^{-1}((A^{-1})^{T}\Rm)$, we conclude from (\ref{normSm}) 
	that $\Nm(Aw)=\Nm((A^{-1})^{T}\Rm w)$, and from (\ref{distSm}) 
	that $\left\Vert Aw\right\Vert _{2}=\left\Vert (A^{-1})^{T} \Rm w\right\Vert _{2}$.
	This shows that $\nu(\Gamma^{\perp}, \cdot)\leq \nu(\Gamma, \cdot)$.
	
	Similarly, if $(A^{-1})^{T} w\in \Gamma^{\perp}$ then, since $\Rm^{-1}\in \IZ^{n\times n}$,
	we find that $A\Rm^{-1}w\in \Gamma$, and using that $(A^{-1})^{T}=\Sm A\Rm^{-1}$
         we conclude as above that $\nu(\Gamma, \cdot)\leq \nu(\Gamma^{\perp}, \cdot)$.
	This proves Proposition \ref{lem: lattices where nu fncs. equal}.

\begin{rem}
	Let $I_{m}\coloneqq\mathrm{diag}(1,\ldots,1)$ be the identity
	matrix, and $0_{m}$ the null matrix in $\IR^{m\times m}$.
	Specialising
	\[
	\Sm=\Rm=\begin{pmatrix}0_{m} & I_{m}\\
	-I_{m} & 0_{m}
	\end{pmatrix}
	\]
	in Proposition \ref{lem: lattices where nu fncs. equal}, 
	we conclude that if $\Gamma=A\IZ^n$ with a symplectic matrix $A$, then 
	\begin{equation}\label{eq: nu dual equals nu original unimod lattice in dimension 2}
		\nu(\Gamma^{\perp},\cdot)=\nu(\Gamma,\cdot).
	\end{equation}
			Moreover, it is easy to see that
	$\mathrm{Sp}_{2}(\IR)=\SL_{2}(\IR)$, 
	and hence (\ref{eq: nu dual equals nu original unimod lattice in dimension 2}) holds 
	for any unimodular lattice $\Gamma\subseteq \IR^2$.
\end{rem}

Next, we prove Theorem \ref{prop: changing nu to nu dual }. 
Recall that $\alpha\coloneqq(\alpha_{1},\ldots,\alpha_{n})^{T}\in\IR^{n}$
is called badly approximable, if there is a constant $C=C(\alpha)>0$
such that for any integer $q\geq1$ the inequality
\begin{equation}
	\max\left\{ \left\Vert q\alpha_{1}\right\Vert ,\ldots,
	\left\Vert q\alpha_{n}\right\Vert \right\} 
	\geq\frac{C}{q^{\nicefrac{1}{n}}}\label{eq: badly approx vector dual formualtion}
\end{equation}
holds where $\left\Vert \cdot\right\Vert $ denotes the distance to
the nearest integer. By a well-known transference principle, cf. \cite{AItDA},
assertion (\ref{eq: badly approx vector dual formualtion}) is equivalent
to saying that for all non-zero vectors 
$q\coloneqq(q_{1},\ldots,q_{n})^{T}\in\mathbb{Z}^{n}$
the inequality 
\begin{equation}
	\left\Vert \left\langle \alpha,q\right\rangle \right\Vert 
	\geq\frac{\tilde{C}}{\left\Vert q\right\Vert _{2}^{n}}\label{eq: badly approximable vector}
\end{equation}
holds where $\tilde{C}=\tilde{C}(\alpha)>0$ is a constant. Let $\mathbf{Bad}(n)$ denote the set
of all badly approximable vectors in $\IR^{n}$. The crucial
step for constructing matrices generating the lattices announced in
Theorem \ref{prop: changing nu to nu dual }
is done by the following lemma.

\begin{lem}\label{lem: ex. of matrix with some prescribed alg. indept. entries}
	Let $n\geq3$ be an integer. Fix algebraically independent real numbers
	$c_{i,j}$ where $i,j=1,\ldots,n$ and $i\neq j$. Then, there exist
	$\lambda_{1},\ldots,\lambda_{n}\in\IR$ such that the entries
	of each row of
	\begin{equation}
		A\coloneqq\begin{pmatrix}\lambda_{1} & c_{1,2} & \ldots & c_{1,n}\\
		c_{2,1} & \lambda_{2} & \ddots & \vdots\\
		\vdots & \ddots & \ddots & c_{n-1,n}\\
		c_{n,1} & \ldots & c_{n,n-1} & \lambda_{n}
		\end{pmatrix}\label{eq: choice of matrix for nu vs. dual nu}
	\end{equation}
	are algebraically independent, $A$ is invertible,
	and each row-vector of $(A^{-1})^{T}$ is badly approximable.
\end{lem}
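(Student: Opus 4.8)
The goal is to produce real numbers $\lambda_1,\dots,\lambda_n$ so that (a) the full collection of entries of $A$ (including the $\lambda_i$ and the prescribed $c_{i,j}$) is algebraically independent over $\IQ$, (b) $\det A\neq 0$, and (c) each row of $(A^{-1})^T$ lies in $\mathbf{Bad}(n)$. Since $\mathbf{Bad}(n)$ has full Hausdorff dimension but measure zero, a naive measure-theoretic argument will not suffice to hit it; the natural tool is Beresnevich's theorem on badly approximable points on manifolds (referenced in the paragraph preceding Theorem \ref{prop: changing nu to nu dual }), which guarantees that badly approximable vectors are \emph{winning}, hence abundant, on suitable nondegenerate analytic submanifolds. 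So the plan is to set up an appropriate parametrised family and extract a good choice of the $\lambda_i$ from it.

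**Step 1: the parametrising map.** Fix the algebraically independent $c_{i,j}$ and regard $A=A(\lambda_1,\dots,\lambda_n)$ as a function of the $n$ free diagonal parameters. The map $\lambda\mapsto\det A(\lambda)$ is a nonzero polynomial (it is a polynomial of degree $n$ in $\lambda_1\cdots\lambda_n$ whose leading term is $\lambda_1\cdots\lambda_n$, so it is not identically zero), hence $A$ is invertible for all $\lambda$ outside a proper algebraic subset. On that open dense set, consider the analytic map
\[
\Phi:\lambda\longmapsto\bigl(\text{rows of }(A(\lambda)^{-1})^T\bigr)\in(\IR^n)^n,
\]
each coordinate of which is a rational function of $\lambda$ (Cramer's rule), with denominator $\det A(\lambda)$. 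I want to apply Beresnevich's theorem to each of the $n$ component maps $\lambda\mapsto(\text{$k$-th row of }(A^{-1})^T)\in\IR^n$ to conclude that, for each $k$, the set of $\lambda$ for which the $k$-th row is badly approximable is winning (for Schmidt's game) on a suitable small ball in $\lambda$-space; intersecting the $n$ winning sets (winning sets are closed under countable intersection) and then further intersecting with the complement of the countably many proper algebraic sets cut out by the algebraic (in)dependence conditions leaves a nonempty — indeed full-dimensional — set of admissible $\lambda$.

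**Step 2: nondegeneracy.** The real work is checking the hypothesis of Beresnevich's theorem: the component maps must be \emph{nondegenerate} (their images must not lie in any affine hyperplane, and more precisely the manifold must be nondegenerate at a.e. point, i.e. the derivatives up to some order span $\IR^n$). Here the off-diagonal entries $c_{i,j}$ being algebraically independent is exactly what makes this go: the $k$-th row of $(A^{-1})^T$ has entries $\pm M_{jk}(\lambda)/\det A(\lambda)$ where $M_{jk}$ is the $(j,k)$-cofactor, and one checks that as $\lambda$ varies these $n$ rational functions, together with the constant $1$, are $\IQ$-linearly independent (indeed the numerators are distinct, up to sign, multilinear-type polynomials in the $\lambda_i$ and $c_{i,j}$, none a scalar multiple of another, by the same monomial-independence argument used in the proof of Lemma \ref{cor: weak admissib. of dual of algb. independet lattice}); this rules out the image lying in a hyperplane and, after localising to a small ball and differentiating, gives the required nondegeneracy. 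I expect this verification — translating "algebraically independent off-diagonal entries" into "nondegenerate analytic curve/submanifold" in precisely the sense Beresnevich requires — to be the main obstacle, because it is where the combinatorics of cofactor expansions meets the analytic hypotheses of the ergodic input; everything else is bookkeeping.

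**Step 3: algebraic independence of the entries of $A$.** Once a winning set $W$ of $\lambda$'s is in hand, I must still arrange that $\lambda_1,\dots,\lambda_n$ together with $\{c_{i,j}\}$ are algebraically independent over $\IQ$. The field $K\coloneqq\IQ(c_{i,j}:i\neq j)$ has transcendence degree $n^2-n$ over $\IQ$; I need $\lambda_1,\dots,\lambda_n$ algebraically independent over $K$. The set of $\lambda\in\IR^n$ failing this is a countable union of zero sets of nonzero polynomials over $\overline{K}\cap\IR$, hence a countable union of proper real-analytic hypersurfaces, which cannot cover the winning set $W$ (winning sets have full dimension, in particular are uncountable and not contained in any such meagre set). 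Pick $\lambda\in W$ avoiding all of them; then (a), (b), (c) all hold, and since $A^{-1}$'s rows being badly approximable is exactly condition (\ref{eq: badly approximable vector}) for each such row, we obtain all three conclusions of the lemma. $\qed$
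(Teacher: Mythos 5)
Your proposal follows essentially the same route as the paper: localise to a region where $\det A(\lambda)\neq 0$, define the maps $F_j(\lambda)$ to be the rows of $(A(\lambda)^{-1})^T$, verify they are analytic and non-degenerate, invoke Beresnevich's theorem to get a large set of $\lambda$'s with all rows badly approximable, and then remove a countable family of proper algebraic subsets to impose algebraic independence of the entries. Two points are worth flagging. First, you appeal to the fact that winning sets are closed under countable intersection, but the theorem actually cited in the paper (Theorem~\ref{thm: badly approximable vectors and analytic maps}) is stated only in terms of full Hausdorff dimension, not Schmidt winning. This does no harm here, since that theorem already allows intersecting over the $k=n$ maps $F_1,\dots,F_n$ directly, and the subsequent removal of countably many proper real hypersurfaces goes through by countable stability of Hausdorff dimension, $\dim_{\mathrm{Haus}}\bigcup_i E_i=\sup_i\dim_{\mathrm{Haus}}E_i$, which is exactly what the paper uses (iterating over the rows to build $M\supseteq M^{(1)}\supseteq\cdots\supseteq M^{(n)}$). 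So no extra winning-set input is needed, and you should not present it as load-bearing. Second, your non-degeneracy verification (Step~2) is genuinely the heart of the argument and is only sketched; the paper carries it out by an explicit degree comparison in $\lambda$: the $j$-th component of $F_1$ has numerator $\mathcal{R}_j+(\pm)\prod_{k=2,\,k\neq j}^n\lambda_k$ with $\deg\mathcal{R}_j$ strictly smaller, so clearing $\det\tilde F(\lambda)$ from $k_0+\sum_j k_j(F_1)_j=\mathbf 0$ and comparing the degree-$n$, then degree-$(n-1)$ and degree-$(n-2)$ homogeneous parts in $\lambda$ forces $k_0=0$ and then $k_1=\cdots=k_n=0$. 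Your phrase ``none a scalar multiple of another'' is weaker than what is actually needed (pairwise non-proportionality does not imply linear independence), so if you wanted to carry your monomial-support idea through, you would have to verify that $\det A$ and the $n$ cofactor numerators have pairwise disjoint monomial supports as polynomials in the formal variables $\lambda_i,c_{i,j}$, and then specialise using the algebraic independence of the $c_{i,j}$; the paper's degree-comparison is shorter.
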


For proving this lemma, we shall use the following special case of
a recent Theorem of Beresnevich concerning badly approximable vectors.
We say that the map 
$F\coloneqq(f_{1},\ldots,f_{n})^{T}:\,\mathcal{B}\rightarrow\IR^{n}$,
where $\mathcal{B}\subsetneq \IR^{m}$ is a non-empty ball and $m,n\in\mathbb{N}$,
is non-degenerate, if $1,\,f_{1},\ldots,\,f_{n}$ are linearly
independent functions (over $\IR$). 

\begin{thm}[{\cite[Thm. 1]{Beresnevich15}}]
	\label{thm: badly approximable vectors and analytic maps}
	Let $n,\,m,\,k$ be positive integers. For each $j=1,\ldots,k$ 
	suppose that $F_{j}:\,\mathcal{B}\rightarrow\IR^{n}$
	is a non-degenerate, analytic map defined
	on a non-empty ball $\mathcal{B}\subsetneq\IR^{m}$. Then,
	\[
	\dim_{\mathrm{Haus}}\bigcap_{j=1}^{k} F_{j}^{-1}\left(\mathbf{Bad}(n)\right)=m.
	\]
\end{thm}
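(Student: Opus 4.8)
The upper bound $\dim_{\mathrm{Haus}}\bigcap_{j=1}^{k}F_{j}^{-1}(\mathbf{Bad}(n))\leq m$ is immediate, since the set is contained in $\IR^{m}$, so the whole content is the lower bound $\dim_{\mathrm{Haus}}\geq m$. The plan is to run a Cantor-type construction inside a small closed ball $\mathcal{B}_{0}\subseteq\mathcal{B}$, chosen small enough that the quantitative non-degeneracy estimates below hold uniformly on it. For each large integer $M$ let $\mathcal{C}_{M}$ be the class of compact subsets of $\mathcal{B}_{0}$ that contain a set of the form $\bigcap_{t\geq 0}\bigcup_{Q\in\mathcal{Q}_{t}}Q$, where $\mathcal{Q}_{0}\supseteq\mathcal{Q}_{1}\supseteq\cdots$ is a nested family of dyadic-type cubes in which, passing from level $t$ to level $t+1$, each cube is split into a fixed number $R_{t}^{m}$ of congruent sub-cubes and at most an $M^{-1}$-fraction of them is deleted. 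Two structural facts about these classes are standard and involve no maps: (a) if $E\in\mathcal{C}_{M}$ then $\dim_{\mathrm{Haus}}E\geq m-\varepsilon(M)$ with $\varepsilon(M)\to 0$ as $M\to\infty$ (by a mass-distribution estimate), so a set lying in $\mathcal{C}_{M}$ for every $M$ has full dimension $m$; and (b) a finite intersection of sets in $\mathcal{C}_{M}$ again lies in $\mathcal{C}_{M'}$ for some $M'=M'(M,k)$ with $M'\to\infty$ as $M\to\infty$ (align the two constructions on a common refinement of scales; the deleted fractions add). Granting (a), (b), it suffices to show that each $F_{j}^{-1}(\mathbf{Bad}(n))$ lies in $\mathcal{C}_{M}$ for every $M$: then $\bigcap_{j=1}^{k}F_{j}^{-1}(\mathbf{Bad}(n))$ lies in $\mathcal{C}_{M'}$ for arbitrarily large $M'$ by (b), hence has dimension $m$ by (a).

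So fix one non-degenerate analytic $F=(f_{1},\dots,f_{n})\colon\mathcal{B}\to\IR^{n}$ and a large $M$. Since $\mathbf{Bad}(n)=\bigcup_{c>0}\mathbf{Bad}(n,c)$ with $\mathbf{Bad}(n,c)=\{y\in\IR^{n}:\Vert q\cdot y\Vert\geq c\Vert q\Vert_{2}^{-n}\ \text{for all }q\in\IZ^{n}\setminus\{0\}\}$, it is enough to show $F^{-1}(\mathbf{Bad}(n,c))\in\mathcal{C}_{M}$ for a sufficiently small $c=c(M)$. Pick a rapidly growing sequence $1=Q_{0}<Q_{1}<Q_{2}<\cdots$, assign to level $t$ those integer vectors $q$ with $Q_{t-1}\leq\Vert q\Vert_{2}<Q_{t}$ (so each nonzero $q$ is treated once), and choose the $R_{t}$ so that the side of a level-$t$ cube is comparable to $Q_{t}^{-1}$. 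Starting from a single small cube contained in $\mathcal{B}_{0}$, build $\mathcal{Q}_{0}\supseteq\mathcal{Q}_{1}\supseteq\cdots$ by subdividing each $Q\in\mathcal{Q}_{t}$ into $R_{t}^{m}$ sub-cubes and deleting precisely those that meet some resonant region $\Delta(p,q):=\{x\in\mathcal{B}_{0}:|q\cdot F(x)-p|<c\Vert q\Vert_{2}^{-n}\}$ with $q$ in the level-$(t+1)$ window and $p\in\IZ$ (only finitely many $p$ are relevant to a given cube). Then $\bigcap_{t}\bigcup_{Q\in\mathcal{Q}_{t}}Q\subseteq F^{-1}(\mathbf{Bad}(n,c))$ by construction, so everything reduces to showing that at most an $M^{-1}$-fraction of sub-cubes is deleted at each step, once $c$ is small.

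This is where analyticity and non-degeneracy are used, via two quantitative estimates. First, a geometry-of-resonance bound: $\Delta(p,q)\cap Q$ is the $F$-preimage of a $c\Vert q\Vert_{2}^{-n}$-neighbourhood of an affine hyperplane of $\IR^{n}$, and since $1,f_{1},\dots,f_{n}$ are linearly independent analytic functions, a Kleinbock--Margulis-type ``good map / non-planarity'' estimate confines this preimage, within $Q$, to a thin neighbourhood of a proper real-analytic subvariety; consequently it can be covered by at most $O(c^{\kappa})\cdot R_{t}^{m}$ of the $R_{t}^{m}$ sub-cubes of $Q$, for some fixed $\kappa>0$ depending only on $n$ and the order of non-degeneracy. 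Second, a counting bound: among the pairs $(p,q)$ with $q$ in the level-$(t+1)$ window, only $O_{n}(1)$ satisfy $\Delta(p,q)\cap Q\neq\emptyset$, uniformly in $t$ and in $Q\in\mathcal{Q}_{t}$, because the image $F(Q)$ of a small cube cannot lie simultaneously close to more than boundedly many distinct rational affine hyperplanes of comparable height --- again a manifestation of non-degeneracy. Multiplying, the number of sub-cubes deleted from any $Q\in\mathcal{Q}_{t}$ is $O_{n}(c^{\kappa})\cdot R_{t}^{m}$, which is below $M^{-1}R_{t}^{m}$ once $c=c(M)$ is small enough; this verifies $F^{-1}(\mathbf{Bad}(n,c))\in\mathcal{C}_{M}$, and letting $M$ vary completes the reduction, hence the theorem by (a), (b).

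The main obstacle is precisely the pair of quantitative non-degeneracy estimates above: controlling the shape and size of a single resonant region pulled back through $F$, and bounding, uniformly across all scales, the number of resonant rational hyperplanes clustering near a small cube. Establishing these rigorously requires the full quantitative non-divergence / ``good function'' machinery for non-degenerate analytic maps in the spirit of Kleinbock--Margulis, and it is there --- rather than in the combinatorial Cantor framework underlying (a) and (b) --- that the hypotheses ``non-degenerate'' and ``analytic'' are indispensable and where essentially all the depth of the theorem resides.
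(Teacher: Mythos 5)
This statement is not proved in the paper at all: it is imported verbatim as \cite[Thm.~1]{Beresnevich15} and used as a black box in the proof of Lemma \ref{lem: ex. of matrix with some prescribed alg. indept. entries}. So there is no internal proof to compare against; what you have written is an attempted reconstruction of Beresnevich's own argument. Your outline does capture the correct global strategy (a Cantor-type construction whose surviving set sits inside $F^{-1}(\mathbf{Bad}(n,c))$, with full dimension extracted from a mass-distribution estimate), but as a proof it has genuine gaps precisely at the points you flag as ``the main obstacle'', and one of your two key estimates is not merely unproved but almost certainly false as stated. The claim that only $O_{n}(1)$ resonant pairs $(p,q)$ with $Q_{t}\leq\Vert q\Vert_{2}<Q_{t+1}$ can meet a given level-$t$ cube, uniformly in $t$, is not what non-degeneracy gives you: between consecutive scales there are on the order of $Q_{t+1}^{n+1}$ candidate pairs, and the actual argument does not bound their number per cube by a constant. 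Instead one stratifies the resonant pairs by the size of the gradient of $x\mapsto q\cdot F(x)$, deletes sub-cubes separately for each stratum, and controls the \emph{total measure} (not the count) of the union of resonant neighbourhoods via quantitative non-divergence for $(C,\alpha)$-good functions; the summability over all strata and all $(p,q)$ is where the exponent structure $c\Vert q\Vert_{2}^{-n}$ is actually used. Replacing that mechanism by ``boundedly many hyperplanes'' breaks the deletion count.

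A second substantive gap is your step (b). Finite intersections of Cantor-type sets built on \emph{different} scale sequences and different subdivision parameters $R_{t}$ do not combine by ``aligning on a common refinement with deleted fractions adding'' in any routine way; making the class of such sets closed under finite intersection while retaining full Hausdorff dimension is one of the principal technical contributions of \cite{Beresnevich15} (the formalism of ``Cantor-rich'' sets), not a standard fact one can cite. Since the counting theorem in this paper only needs the statement as an external input, none of this affects the paper; but your proposal should be regarded as a plausible roadmap to Beresnevich's proof rather than a proof, with the two load-bearing estimates and the intersection stability all still to be established.
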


\begin{proof}[Proof of Lemma \ref{lem: ex. of matrix with some prescribed alg. indept. entries}]
	We work in two steps. First, we set the scene 
	to make use of Theorem \ref{thm: badly approximable vectors and analytic maps}.

	(i) Let $M\in\IR^{n\times n}$, and denote by $(M)_{i,j}$
	the entry in the $i$-th row and $j$-th column of $M$. Moreover,
	we define a map $\tilde{F}:\IR^{n}\rightarrow\IR^{n\times n}$
	by 
	\[
	\lambda\coloneqq(\lambda_{1},\ldots,\lambda_{n})^{T}\mapsto\begin{pmatrix}\lambda_{1} 
	& c_{1,2} & \ldots & c_{1,n}\\
	c_{2,1} & \lambda_{2} & \ddots & \vdots\\
	\vdots & \ddots & \ddots & c_{n-1,n}\\
	c_{1,n} & \ldots & c_{n,n-1} & \lambda_{n}
	\end{pmatrix}.
	\]
	On a sufficiently small non-empty ball $\mathcal{B}\subsetneq\IR^{n}$, centred at the origin,
	$\tilde{F}(\lambda)$ is invertible for every $\lambda\in \mathcal{B}$.\footnote{To see this,
	it suffices to show $\det\tilde{F}((0,\ldots,0)^T)\neq 0$. However, by the Leibniz formula,
	\[
	\det\tilde{F}(0,\ldots,0)=\sum_{\sigma}\mathrm{sgn}(\sigma)
	\prod_{i=1}^{n}c_{i,\sigma(i)}
	\]
	where the sum runs through all fixpoint-free permutations of $\{1, \ldots ,n\}$.
	Since $\left\{ c_{i,j}:\,i,j=1,\ldots,n,\,i\neq j\right\} $
	is algebraically independent, the evaluation of the polynomial on the right hand side above
	cannot vanish, cf. proof of 
	Lemma \ref{cor: weak admissib. of dual of algb. independet lattice}.} 
	On this ball $\mathcal{B}$, we define $F_{j}$, for $j=1,\ldots,n$,
	by mapping $\lambda$ to the $j$-th row of $(\bigl(\tilde{F}(\lambda)\bigr)^{-1})^T$.
	We claim that $F_{j}$ is a non-degenerate, and analytic
	map. By Cramer's rule, every entry of $((\tilde{F}(\lambda))^{-1})^T$
	is the quotient of polynomials in $\lambda_{1},\ldots,\lambda_{n}$
	whereas the polynomial in the denominator does not vanish on $\mathcal{B}$.
	Hence, each $F_{j}$ is an analytic function. Now we show
	that $F_{1}$ is non-degenerate, the argument for the other
	$F_{j}$ being similar. The $j$-th component of $F_{1}$
	is $(\bigl(\tilde{F}(\lambda)\bigr)^{-1}){}_{j,1}$
	and, using Cramer's rule, is hence of the shape
	\[
	(\det\tilde{F}(\lambda))^{-1} \Biggl(\mathcal{R}_{j}+(-1)^{1+j}
	\prod_{k=2,\,k\neq j}^{n}\lambda_{k}\Biggr)
	\]
	where the polynomial $\mathcal{R}_{j}\in\IR[\lambda_{2},\ldots,\lambda_{n}]$
	is of (total) degree $<n-1$, if $j=1$, and of (total) degree $<n-2$,
	if $j=2,\ldots,n$. Therefore, if a linear combination $k_{0}+
	\sum_{j=1}^{n}k_{j}((\tilde{F}(\lambda))^{-1})_{j,1}$ with scalars $k_{0},\ldots,k_{n}\in\IR$ 
	equals the zero-function $\mathbf{0}:\mathcal{B} \rightarrow \mathbb{R}$, then 
	\[
	\mathbf{0}= k_{0}\cdot (\det\tilde{F}(\lambda)) +\sum_{j=1}^{n}k_{j}(-1)^{1+j}
	\prod_{k=2,\,k\neq j}^{n}\lambda_{k}+\sum_{j=1}^{n}k_{j}\mathcal{R}_{j}.
	\]
	Comparing coefficients, we conclude that $k_{0}=0$ and thereafter
	$k_{1}=k_{2}=\cdots=k_{n}=0$. Hence, $F_{1}$ is non-degenerate.

	(ii) 
	By part (i), Theorem \ref{thm: badly approximable vectors and analytic maps}
	implies that the set $M$ of all $\lambda\in \mathcal{B}$ such that
	$F_{1}(\lambda),\ldots,F_{n}(\lambda)$ are all badly approximable,
	has full Hausdorff dimension. Moreover, we claim that there is a set
	$M^{(1)}\subseteq M$ of full Hausdorff dimension such that for every
	$\lambda\in M^{(1)}$ the entries of the first row of $\tilde{F}(\lambda)$ 
	are	algebraically independent. Let $M_{1}$ be the subset of $M$ of all
	elements $\lambda\coloneqq(\lambda_{1},\ldots,\lambda_{n})^{T}\in M$
	satisfying that $\left\{ \lambda_{1},c_{1,j}:\,j=2,\ldots,n\right\} $
	is algebraically \textit{dependent}; observe that the possible values for $\lambda_{1}$
	are countable, since $\mathbb{Z}[c_{1,2},\ldots,c_{1,n},x]$
	is countable and every complex, non-zero, univariate polynomial has only finitely many roots.
	Therefore, $M_{1}$ is contained in a countable	union of hyperplanes.
	It is well-known that if a sequence of sets $\{E_{i}\} \subseteq \mathbb{R}^{n}$ is given, then 
	$\dim_{\mathrm{Haus}}\bigcup_{i\geq1}E_{i}
	=\sup_{i\geq 1} \{\dim_{\mathrm{Haus}}E_{i}\}$, cf. \cite[p. 65]{BD}.
	Consequently,
	\[
	n=\dim_{\mathrm{Haus}}M=\max\left\{ \dim_{\mathrm{Haus}}(M\setminus M_{1}),
	\,\dim_{\mathrm{Haus}}M_{1}\right\} =\dim_{\mathrm{Haus}}(M\setminus M_{1}),
	\]
	and we define $M^{(1)}\coloneqq M\setminus M_{1}$. Using the same
	argument, we conclude that there is a set $M^{(2)}\subseteq M^{(1)}$
	of full Hausdorff dimension such that each of the first two rows of
	$\tilde{F}(\lambda)$ has algebraically independent entries
	for every $\lambda\in M^{(2)}$. Iterating this construction, 
	we infer that there is a subset $M^{(n)}\subseteq M^{(n-1)}\subseteq\ldots\subseteq M$
	of full Hausdorff dimension such that for every $\lambda\in M^{(n)}$ 
	each row of the matrix 	$A \coloneqq \tilde{F}(\lambda)$ 
	has algebraically independent entries, and 	$(A^{-1})^{T}$ has badly approximable row vectors. 
	Moreover, $\lambda\in M^{(n)}\subseteq \mathcal{B}$ implies that $A$ is invertible.
	
\end{proof}
We also need the following easy fact whose proof is left as an exercise.
\begin{lem}\label{lem: quotient lemma}
	Let $m\in \mathbb{N}$, and let $\alpha\in \IR$ be transcendental.
	Then, there are real numbers $\beta_{1},\ldots, \beta_{m}$ such that 
	$\beta_{1}, \alpha\beta_{1}, \beta_{2},\ldots, \beta_{m}$ 
	are algebraically independent.
\end{lem}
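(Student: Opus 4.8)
The plan is to obtain the $\beta_i$ from a transcendence-degree count, reabsorbing the factor $\alpha$ afterwards by a field identity. Since $\IR$ has uncountable transcendence degree over $\IQ$ while $\IQ(\alpha)$ has transcendence degree $1$ over $\IQ$ (as $\alpha$ is transcendental), the extension $\IR/\IQ(\alpha)$ still has transcendence degree at least $m$; hence I can choose real numbers $\gamma_{1},\ldots,\gamma_{m}$ that are algebraically independent over $\IQ(\alpha)$. By additivity of transcendence degree in the tower $\IQ\subseteq\IQ(\alpha)\subseteq\IQ(\alpha)(\gamma_{1},\ldots,\gamma_{m})$, the $m+1$ real numbers $\alpha,\gamma_{1},\ldots,\gamma_{m}$ are then algebraically independent over $\IQ$; in particular $\gamma_{1}\neq0$.

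I would then take $\beta_{i}:=\gamma_{i}$ for $i=1,\ldots,m$ and verify that $\beta_{1},\alpha\beta_{1},\beta_{2},\ldots,\beta_{m}$ are algebraically independent over $\IQ$. The main point is that, because $\gamma_{1}\neq0$, one has the equality of subfields of $\IR$
\[
\IQ\bigl(\gamma_{1},\alpha\gamma_{1},\gamma_{2},\ldots,\gamma_{m}\bigr)=\IQ\bigl(\alpha,\gamma_{1},\ldots,\gamma_{m}\bigr),
\]
since $\alpha=(\alpha\gamma_{1})/\gamma_{1}$ lies in the left-hand field and $\alpha\gamma_{1}$ lies in the right-hand field. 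The right-hand field has transcendence degree $m+1$ over $\IQ$, hence so does the left-hand field; but the latter is generated over $\IQ$ by the $m+1$ elements $\gamma_{1},\alpha\gamma_{1},\gamma_{2},\ldots,\gamma_{m}$, so these form a transcendence basis and are in particular algebraically independent over $\IQ$. An elementary alternative avoids the field language: if a nonzero $P\in\IQ[X_{0},\ldots,X_{m}]$ vanished at $(\gamma_{1},\alpha\gamma_{1},\gamma_{2},\ldots,\gamma_{m})$, then $Q:=P(X_{0},X_{0}Y,X_{2},\ldots,X_{m})$ would be a nonzero polynomial — the substitution sends $X_{0}^{i_{0}}X_{1}^{i_{1}}$ to $X_{0}^{i_{0}+i_{1}}Y^{i_{1}}$, which is injective on monomials since one reads off $i_{1}$ from the exponent of $Y$ and then $i_{0}$ — that vanishes at $(\gamma_{1},\alpha,\gamma_{2},\ldots,\gamma_{m})$, contradicting the algebraic independence established above.

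I do not expect a real obstacle here; the argument is routine. The only ingredients are that $\IR$ has large transcendence degree over $\IQ$, the additivity of transcendence degree in field towers, and the Steinitz-type fact that a generating set whose cardinality equals the transcendence degree is automatically a transcendence basis; if one prefers to sidestep the last of these, the injectivity of the monomial substitution in the elementary alternative does the job.
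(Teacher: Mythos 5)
Your proof is correct and complete. The paper explicitly leaves this lemma's proof as an exercise, so there is no in-text argument to compare against; your argument fills the gap. Both variants you offer work: the transcendence-degree count using the field identity $\IQ(\gamma_1,\alpha\gamma_1,\gamma_2,\ldots,\gamma_m)=\IQ(\alpha,\gamma_1,\ldots,\gamma_m)$ together with the fact that a generating set of cardinality equal to the transcendence degree is a transcendence basis, and the elementary alternative via the monomial substitution $X_1\mapsto X_0Y$, whose injectivity on monomials $(i_0,i_1)\mapsto(i_0+i_1,i_1)$ you justify correctly, so that a nonzero $P$ yields a nonzero $Q$ vanishing at $(\gamma_1,\alpha,\gamma_2,\ldots,\gamma_m)$, contradicting the algebraic independence of $\alpha,\gamma_1,\ldots,\gamma_m$. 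The only prerequisite not stated explicitly is that $\IR$ has transcendence degree at least $m+1$ over $\IQ$, which is true (indeed uncountable) and standard.
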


\begin{proof}[Proof of Theorem \ref{prop: changing nu to nu dual }.]
	First, we set $\tilde{\psi}(x)=\psi(x^2)$ such that for every $c>0$
	and $x\geq c$ we have $\tilde{\psi}(x)\leq\psi(cx)$.
       	We may assume that $\tilde{\psi}(q)\ll \exp(-q)$. 
	By writing down
	a suitable decimal expansion,
	we conclude that there exists a number $\alpha\in(0,1)$ such that
	\begin{equation}
		\Bigl|\alpha-\frac{p}{q}\Bigr|<\frac{\tilde{\psi}(q)}{q^{n+1}}
	\label{eq: approximations to generalized L numbers}
	\end{equation}
	has infinitely many coprime integer solutions $p,\,q\in\mathbb{Z}$;
	observe that such an $\alpha$ is necessarily transcendental.
	We apply Lemma \ref{lem: quotient lemma} with $m=n^2-n$ and we set 
	$c_{1,2} \coloneqq \beta_{1}, c_{1,3} \coloneqq \alpha \beta_{1}$, 
	and we choose exactly one value $\beta_k$ ($k\geq 2$) for each of the remaining $c_{i,j}$  ($i\neq j$).
	Thus, the real numbers $c_{i,j}$ are algebraically independent. 
	We use Lemma \ref{lem: ex. of matrix with some prescribed alg. indept. entries}
	with these specifications to find $A$ as in (\ref{eq: choice of matrix for nu vs. dual nu}).
	For $l\in \mathbb{N}$ let $p_{l},\,q_{l}$ denote  distinct 
	solutions to (\ref{eq: approximations to generalized L numbers}),
	and put $v_{l}\coloneqq(0,-p_{l},q_{l},0,\ldots,0)^{T}\in\IZ^{n}$.
	Set $\tilde{A}:=|\det A|^{-1/n}A$,
	and let us consider the unimodular, weakly admissible lattice 
	$\Gamma\coloneqq \tilde{A}\mathbb{Z}^{n}$.
	Then, the first coordinate of $\tilde{A}v_{l}$ equals
	\[
		|\det A|^{-1/n}\left|-p_{l}c_{1,2}+q_{l}c_{1,3}\right|
		=|\det A|^{-1/n}\left|c_{1,2}\right|\left|q_{l}\alpha-p_{l}\right|
		\underset{A}{\ll} \frac{\tilde{\psi}(q_{l})}{q_{l}^{n}}.
	\]
	Since $\alpha\in(0,1)$, we may assume, by choosing $l$ large enough,
	that $p_{l} \leq q_{l}$. Hence, the $j$-th coordinate for $j=2,\ldots,n$
	of $\tilde{A}v_{l}$ is $\underset{A}{\ll} q_{l}$. Thus, for $l$ sufficiently large,
	\[
		\mathrm{Nm}(\tilde{A}v_{l})\underset{A}{\ll}
		\frac{\tilde{\psi}(q_{l})}{q_{l}^{n}}\cdot q_{l}^{n-1}=\frac{\tilde{\psi}(q_{l})}{q_{l}}
		\leq \frac{\psi(2\Vert \tilde{A}\Vert_{2} q_{l})}{q_{l}}
		\leq \frac{\psi(\Vert \tilde{A}v_{l}\Vert_2)}{q_{l}}.
	\]
	Choosing $\rho_l=\Vert \tilde{A}v_{l}\Vert_2$, we conclude that
	$\nu(\Gamma,\rho_l)\leq \psi(\rho_l)$ for all $l$ sufficiently large.

	Because the rows of $(A^{-1})^{T}$ are badly approximable vectors by construction,
	$\Gamma^{\perp}$ is weakly admissible. Moreover, by (\ref{eq: badly approximable vector}),
	we conclude that $\mathrm{Nm}((A^{-1})^{T}v)\underset{A}{\gg} 
	\left\Vert v\right\Vert_{2}^{-n^2}$ for every non-zero
	$v\in \IZ^n$. Also note that $\left\Vert(A^{-1})^{T}v\right\Vert_2<\rho$
	implies $\left\Vert v\right\Vert_2 < \Vert A^{T}\Vert_{2}\rho$. This implies that
	$\nu(\Gamma^{\perp},\rho)\underset{A}{\gg} \rho^{-n^{2}}$. 
	Hence, $\Gamma$ has the desired properties.
\end{proof}

\section{An Application - Proof of Corollary \ref{thm: main counting bound}}
\label{Section: proof of asyptotic expansion of Dio. counting function}
Throughout this section we fix the unimodular lattice $\Gamma=A\mathbb{Z}^{2}$ where
\[
A:=\frac{1}{\sqrt{\alpha}}\begin{pmatrix}1 & \alpha\\
1 & 2\alpha
\end{pmatrix},
\]
and we consider the aligned box
\begin{equation} 
B\coloneqq\frac{1}{\sqrt{\alpha}}
\bigl(
	\bigl[y,\,y+\varepsilon    \bigr]
	\times\bigl[y,\,y+\alpha t \bigr]
\bigr).\label{eq: def. of box in application}
\end{equation}
Then, the following relation holds
\begin{equation}
\nonumber \#(B\cap\Gamma)=\#\biggl\{(p,q)\in \IZ^2:\begin{array}{c}
0\leq p+\alpha q-y\leq\varepsilon,\\
0\leq p+2\alpha q-y\leq\alpha t
\end{array}\biggr\}.
\end{equation}
Because of (\ref{eq: assumptions on varepsilon t and alpha}), we conclude that
\begin{equation}\label{eq: counting function is counting lattice points}
	|N_{\alpha,y}(\varepsilon,t)-\#(B\cap\Gamma)|\underset{\alpha}{\ll} 1.
\end{equation}

In order to use Theorem \ref{thm: inhomogenous Skriganov},
we need to control the characteristic quantity $\nu(\Gamma,\cdot)$
of the lattice $\Gamma$. This is where the Diophantine properties of $\alpha$
come into play.
\begin{lem}\label{lem: nu lattice in dim 2 vs. Diophantine type}
	Let $\phi$ be as in (\ref{eq: definition of phi as lower bound}),
	and suppose $\rho > \gamma_{2}^{\nicefrac{1}{2}}$. Then, we have 
	\[
	\nu(\Gamma^{\perp},\rho)=\nu(\Gamma,\rho)
	\geq\frac{\phi(4\rho/\sqrt{\alpha})}{4}.
	\]
\end{lem}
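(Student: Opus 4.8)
The plan is to first settle the equality $\nu(\Gamma^{\perp},\cdot)=\nu(\Gamma,\cdot)$ and then establish the lower bound for $\nu(\Gamma,\cdot)$ by hand. Since
\[
\det A=\frac{1}{\alpha}\det\begin{pmatrix}1 & \alpha\\ 1 & 2\alpha\end{pmatrix}=1,
\]
the lattice $\Gamma=A\IZ^{2}$ is unimodular, so $A\in\SL_{2}(\IR)=\mathrm{Sp}_{2}(\IR)$ and the equality $\nu(\Gamma^{\perp},\cdot)=\nu(\Gamma,\cdot)$ is precisely (\ref{eq: nu dual equals nu original unimod lattice in dimension 2}). It remains to show $\nu(\Gamma,\rho)\geq\phi(4\rho/\sqrt{\alpha})/4$.

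A general point of $\Gamma$ is $x=A(p,q)^{T}=\frac{1}{\sqrt{\alpha}}(p+\alpha q,\,p+2\alpha q)^{T}$ with $(p,q)^{T}\in\IZ^{2}$, so that
\[
\mathrm{Nm}(x)=\frac{1}{\alpha}\,\bigl|p+\alpha q\bigr|\,\bigl|p+2\alpha q\bigr|\qquad\text{and}\qquad x_{2}-x_{1}=\sqrt{\alpha}\,q .
\]
First I would reduce to $q\geq1$: replacing $(p,q)$ by $(-p,-q)$ changes neither $\mathrm{Nm}(x)$ nor $\Vert x\Vert_{2}$, and if $q=0$ and $x\neq0$ then $|p|\geq1$, whence $\mathrm{Nm}(x)=p^{2}/\alpha\geq1/\alpha>1>\phi(4\rho/\sqrt{\alpha})/4$ because $\alpha\in(0,1)$ and $\phi<1$. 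So assume $q\geq1$ and $0<\Vert x\Vert_{2}<\rho$; then $\sqrt{\alpha}\,q=|x_{2}-x_{1}|\leq|x_{1}|+|x_{2}|\leq2\Vert x\Vert_{2}<2\rho$, and hence $q<2\rho/\sqrt{\alpha}$ and $2q<4\rho/\sqrt{\alpha}$.

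The final step is a case distinction on the size of $|p+\alpha q|$. If $|p+\alpha q|\leq\alpha q/2$, then the reverse triangle inequality gives $|p+2\alpha q|\geq\alpha q-|p+\alpha q|\geq\alpha q/2$; together with $|p+\alpha q|\geq\phi(q)/q$, which is (\ref{eq: definition of phi as lower bound}) for the denominator $q$, this yields $\mathrm{Nm}(x)\geq\frac{1}{\alpha}\cdot\frac{\phi(q)}{q}\cdot\frac{\alpha q}{2}=\frac{\phi(q)}{2}$. If instead $|p+\alpha q|>\alpha q/2$, then (\ref{eq: definition of phi as lower bound}) for the denominator $2q$ gives $|p+2\alpha q|\geq\phi(2q)/(2q)$, and so $\mathrm{Nm}(x)>\frac{1}{\alpha}\cdot\frac{\alpha q}{2}\cdot\frac{\phi(2q)}{2q}=\frac{\phi(2q)}{4}$. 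In either case $\mathrm{Nm}(x)\geq\phi(2q)/4$, and since $\phi$ is non-increasing and $2q<4\rho/\sqrt{\alpha}$, this is $\geq\phi(4\rho/\sqrt{\alpha})/4$, which proves the lemma. There is no real obstacle here; the only points that need care are invoking the Diophantine hypothesis for the second linear form with the denominator $2q$ (not $q$), and extracting the matching bound $2q<4\rho/\sqrt{\alpha}$ from the identity $x_{2}-x_{1}=\sqrt{\alpha}\,q$.
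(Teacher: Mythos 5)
Your proof is correct and follows essentially the same route as the paper's. Both proofs: obtain the equality $\nu(\Gamma^{\perp},\cdot)=\nu(\Gamma,\cdot)$ from Proposition \ref{lem: lattices where nu fncs. equal} and the remark about $\mathrm{Sp}_2(\IR)=\SL_2(\IR)$; dispatch the case $q=0$ by noting $\mathrm{Nm}(x)\geq 1/\alpha>1$; bound $|q|$ in terms of $\rho$ using that the difference of the two coordinates is $\sqrt{\alpha}\,q$; and then use the identity $(p+2\alpha q)-(p+\alpha q)=\alpha q$ together with the Diophantine hypothesis (\ref{eq: definition of phi as lower bound}) applied with denominator $2q$ to force $\mathrm{Nm}(x)\geq\phi(2q)/4$. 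Your explicit two-case split on the size of $|p+\alpha q|$ is just an unpacked version of the paper's compact observation that one of the two coordinates is $\geq\alpha|q|/2$ while both are $\geq\phi(2|q|)/(2|q|)$, and your use of $|x_1|+|x_2|\leq 2\Vert x\Vert_2$ to get $q<2\rho/\sqrt{\alpha}$ is a marginally wasteful but entirely adequate substitute for the paper's $\max\{|z|,|z'|\}\leq\sqrt{\alpha}\Vert v\Vert_2$.
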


\begin{proof}
	The claimed equality follows immediately 
	from Proposition \ref{lem: lattices where nu fncs. equal}, and the remark thereafter.
	A vector $v\in\Gamma$ is of the shape
	$$v = \frac{1}{\sqrt{\alpha}} \begin{pmatrix} z\\ z' \end{pmatrix}$$ 
	where $z\coloneqq p+q\alpha$, $z' \coloneqq z+q\alpha$, and $p,q$ denote integers.
	Assume that $\left\Vert v\right\Vert_{2}\in(0,\rho)$.
	Observe that $q=0$ implies $\mathrm{Nm}(v)\geq 1> 4^{-1} \phi(4\rho/\sqrt{\alpha})$.
	Therefore, we may assume $q\neq 0$. Since $z'-z=q\alpha$,
	one of the numbers $\vert z \vert,\vert z' \vert$ 
	is at least $\frac{1}{2}\alpha \vert q\vert$, and both are bounded from below by 
	$\frac{1}{2\vert q\vert}\phi(2\vert q \vert)$. Hence,
	\[
	\Nm (v) \geq \frac{\alpha \vert q\vert}{2\sqrt{\alpha}}\cdot 
	\frac{\phi(2\vert q\vert)}{2\vert q\vert \sqrt{\alpha}} \geq \frac{\phi(4\rho/\sqrt{\alpha})}{4}
	\]
	where in the last step we used that
	$\frac{1}{2} \sqrt{\alpha} \vert q \vert 
	\leq\frac{1}{\sqrt{\alpha}} \min\{\vert z \vert, \vert z' \vert\}
	\leq \Vert v \Vert_{2} < \rho.$
\end{proof}
 
\begin{proof}[Proof of Corollary \ref{thm: main counting bound}]
	Let $B$ be given by (\ref{eq: def. of box in application}). 
	Thus, $B$ has sidelengths $t_{1}=\alpha^{-\nicefrac{1}{2}}\varepsilon$,
	and $t_{2}=\sqrt{\alpha}t$. By (\ref{Hermite}) and 
	(\ref{eq: assumptions on varepsilon t and alpha}), 
	we are entitled to take $\rho\coloneqq \varepsilon t > \gamma_{2}^{\nicefrac{1}{2}}$
	in Theorem \ref{thm: inhomogenous Skriganov}.
	Moreover, (\ref{eq: assumptions on varepsilon t and alpha}) implies
	$t_{1}< 1 <t_{2}$, and thus
	\[
	T=\sqrt{\alpha\frac{t}{\varepsilon}}
	> \sqrt{\varepsilon t} > 2 > \gamma_{2}.
	\]
	Hence, $T^{\star}=T$. By combining relation
	(\ref{eq: counting function is counting lattice points}) 
	and Theorem \ref{thm: inhomogenous Skriganov} with these specifications,
	it follows that 
	\begin{equation}\label{eq: first estimate for Diophantine counting fnc.}
		\vert N_{\alpha,y}(\varepsilon,t) - \varepsilon t \vert 
		\underset{\alpha}{\ll} \frac{1}{\nu(\Gamma^{\perp}, T)}  
		\left( 1 + \frac{R}{\nu(\Gamma^{\perp},2^{R}T)} \right).
	\end{equation}
	By Lemma \ref{lem: nu lattice in dim 2 vs. Diophantine type}, 
	the right hand side above is $ \ll R(\phi(4T/\sqrt{\alpha})
	\phi(2^{R+2}T/\sqrt{\alpha}))^{-1}.$
	The first factor in the round brackets is larger than the second one, 
	since $\phi$ is non-increasing. Hence, we conclude that the right hand-side of (\ref{eq: first estimate for Diophantine counting fnc.}) is bounded by 
	\begin{equation}\label{Appupperbound}
	\ll R(\phi(2^{R+2}T/\sqrt{\alpha}))^{-2}. 
	\end{equation}
	Furthermore, Lemma \ref{lem: nu lattice in dim 2 vs. Diophantine type} yields
	\begin{equation}\label{eq: estimate for R in app.}
		R\leq 4+\log\frac{4(\varepsilon t)^{2}}{\phi(4t\sqrt{\varepsilon t})}
		\ll \log\frac{\varepsilon t}{\phi(4t\sqrt{\varepsilon t})}.
	\end{equation}
	By using the first estimate from (\ref{eq: estimate for R in app.}), we get
	\[
	2^{R}\leq 2^{4}
	\left(\frac{4(\varepsilon t)^{2}}{\phi(4t\sqrt{\varepsilon t})}
	\right)^{\log2}
	< 2^{4+2\log2}\frac{(\varepsilon t)^{2}}{\phi(4t\sqrt{\varepsilon t})}.
	\]
	Hence, (\ref{Appupperbound}) is bounded from above by
	\[
	\ll \frac{\log\frac{\varepsilon t}{\phi(4t\sqrt{\varepsilon t})}}
	{\phi^{2}\left(2^{6+2\log2}
	\frac{(\varepsilon t)^{2}}{\phi(4t\sqrt{\varepsilon t})}
	\sqrt{\frac{t}{\varepsilon}}\right)} 
	\leq \frac{\log E}{\phi^{2} (E')}.
	\]
	This completes the proof of Corollary \ref{thm: main counting bound}.
\end{proof}

\section*{acknowledgements}
We would like to thank Carsten Elsner for sending us a preprint of his work.

\bibliographystyle{amsplain}
\bibliography{literature}
\end{document}